\newtheoremstyle{bthm}{\baselineskip}{\baselineskip}{\slshape}{}{\bfseries}{}{\newline }{}
\newtheoremstyle{bex}{\baselineskip}{\baselineskip}{}{}{\sffamily}{}{\newline }{}
\theoremstyle{bthm}
\newtheorem{thm}{Theorem}[section]
\newtheorem{cor}[thm]{Corollary}
\theoremstyle{bex}
\begin{document}
\begin{titlepage}
\title{Clique number and Chromatic number of a graph associated to a Commutative Ring with Unity}

\author{T. Kavaskar}
\date{{\baselineskip .2in{\footnotesize Department of Mathematics, Central University of Tamil Nadu, Thiruvarur-610005, India.\\}}
{\footnotesize t\_kavaskar@yahoo.com}}
\maketitle

\begin{abstract}
Let $R$ be a commutative ring with unity (not necessarily finite). The ring $R$ consider as a simple graph whose vertices are the elements of $R$ with two distinct vertices $x$ and $y$ are adjacent if $xy$=0 in R, where 0 is the zero element of $R$. In 1988 \cite{Beck}, Beck raised the conjecture that the chromatic number and clique number are same in a graph associated to any commutative ring with unity. In 1993 \cite{And}, Anderson and Naseer disproved the conjecture by giving a counter example of the conjecture (Note that, till date this is a only one counter example). In this paper, we find the clique number and bounds for chromatic number of a graph associated to any finite product of commutative rings with unity in terms of its factors (In perticular, if $R$ is finite which is not a local ring, we obtain the clique number and bounds for chromatic number of a graph associated to $R$ in terms of its local rings). As a consequence of our results, we construct infinitely many counter examples of the above conjecture. Also, some of the main results, proved by Beck in 1988 \cite{Beck} and Anderson and Naseer in 1993 \cite{And} are consequences of our results.   
\end{abstract}
\noindent
\textbf{Key Words:} Commutative ring with unity, Clique number, Chromatic number.\\
\textbf{AMS Subject Classification:} 13A99, 05C15.

\section{Introduction}

In this paper, we consider all rings to be commutative ring with unity (that is, multiplicative identity). Let $R$ be a ring (not necessarily finite). A non-zero element $x$ of $R$ is said to be zero-divisor if there exists an non zero element $y$ of $R$ such that $xy$=0. The ring $R$ consider as a simple graph whose vertices are the elements of $R$ with two distinct vertices $x$ and $y$ are joined by an edge if and only if $xy=$0. We use $R$ to denote both the ring as well its associated graph. The concept was introduced by Beck in 1988 \cite{Beck}, where he was mainly interested in colorings. In Beck's original definition, it is observed that the vertex 0 is adjacent to every other vertex, but every elements are adjacent only to 0. Retaining this original definition, the next decade brought little progress. However, in 1999 [2], Anderson and Livingston modified and studied the zero-divisor graph whose vertices are the nonzero zero-divisors of the commutative ring with unity. This yielded a number of basic results on zero-divisor graphs of commutative rings, posets, semigroups, lattice and semilattice.  Subsequently, research has moved in several directions, see for example in (\cite{Akb}, \cite{And1}-\cite{And3}, \cite{Aza}, \cite{Dem}-\cite{Spi}). In this paper, we consider the original definition of Beck in 1988. 

The chromatic number of a ring $R$ is the minimum number of colors needed to color the elements of $R$	so that adjacent elements of $R$ receive distinct colors and is denoted by $\chi(R)$. This means, $R$ can be partitioned into $\chi(R)$ subsets of $R$, say $V_1,\ldots,V_{\chi(R)}$, such that for $1\leq i\leq \chi(R)$, $xy\neq 0$, for all $x,y\in V_i$ (This means, each $V_i$ is an independent subset in $R$). We call $V_i$ is the color class $i$, for $1\leq i\leq \chi(R)$. Note that $|V_i|$ need not be finite.

The clique number of a ring $R$ is the maximum size of a subset $C$ of $R$ for which $xy=0$, for all $x,y\in C$ and it is denoted by $\omega(R)$. That means, $\omega(R)$ is the maximum size of a complete subgraph of $R$. Note that for any ring $R$, $\omega(R)\leq \chi(R)$. Also note that these numbers need not be finite, see \cite{Beck}. 

A non-zero element $u$ of a ring $R$ is unit in $R$ if there exists $v$ in $R$ such that $uv$=1. A ring $R$ is said to be local if it has unique maximal ideal, equivalently, the set of all non-unit elements forms an ideal in $R$, see \cite{Ati}. A subset $J$ of $R$ is nilradical of $R$ if for every $x\in J$, there exists a positive integer $n$ such that $x^n=0$ (equivalently, it is the intersection of all prime ideals of $R$, see \cite{Ati}). A ring $R$ is said to be reduced if its nilradical $J=\{0\}$. The index of nilpotency of $J$ is the least positive integer $m$ for which $J^m=\{0\}$, where $J^m=JJ\ldots J$ ($m$-times). A ring $R$ is Artin if it satisfies the descending chain condition on ideals, that is, there is no infinite descending sequence of ideals (See \cite{Ati}). We recall a Structure theorem for Artin rings in \cite {Ati}. 
\begin{thm} [Structure theorem for Artin rings see \cite{Ati}]
Every commutative Artin ring $R$ is uniquely (up to isomorphism) a finite direct product of Artin local ring. 
\end{thm}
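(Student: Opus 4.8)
Since this is the classical structure theorem for Artin rings, the plan is to derive it from the dimension-zero behaviour of such rings together with the Chinese Remainder Theorem; I only sketch the steps. Throughout, $R$ is a commutative Artin ring.

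First I would establish three standard facts. (i) \emph{Every prime ideal $P$ of $R$ is maximal.} The quotient $R/P$ is an Artin integral domain, so for $0\neq\bar x\in R/P$ the descending chain $(\bar x)\supseteq(\bar x^2)\supseteq\cdots$ must stabilize; from $(\bar x^n)=(\bar x^{n+1})$ we get $\bar x^n=\bar a\,\bar x^{n+1}$, and cancelling $\bar x^n$ in the domain gives $\bar a\bar x=1$, so $R/P$ is a field. (ii) \emph{$R$ has only finitely many maximal ideals $M_1,\dots,M_n$.} Otherwise the chain of finite intersections $M_1\cap\cdots\cap M_k$ would be strictly decreasing, since $M_{k+1}\supseteq M_1\cdots M_k$ would force $M_{k+1}=M_i$ for some $i\le k$, a contradiction. (iii) \emph{The nilradical $J$ is nilpotent,} say $J^m=(0)$. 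Let $I$ be the stable value of $J\supseteq J^2\supseteq\cdots$, so $I^2=I$; if $I\neq(0)$, pick an ideal $L$ minimal among those $B$ with $BI\neq(0)$. Then $L=(x)$ is principal (any $x\in L$ with $xI\neq(0)$ works), and since $(xI)I=xI^2=xI\neq(0)$ with $xI\subseteq(x)$, minimality gives $xI=(x)$; thus $x=xy$ for some $y\in I\subseteq J$, and as $y$ is nilpotent, $x=xy=xy^2=\cdots=0$, contradicting $xI\neq(0)$.

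Next I would assemble the decomposition. By (i), the nilradical equals the Jacobson radical, so $J=M_1\cap\cdots\cap M_n=M_1\cdots M_n$, the last equality because distinct maximal ideals are pairwise comaximal. The ideals $M_1^m,\dots,M_n^m$ are again pairwise comaximal, and $\bigcap_i M_i^m=\prod_i M_i^m=(M_1\cdots M_n)^m=J^m=(0)$ by (iii). The Chinese Remainder Theorem then gives a ring isomorphism $R\cong\prod_{i=1}^n R/M_i^m$. Each factor $R/M_i^m$ is Artin, being a quotient of $R$, and it is local: a maximal ideal of $R/M_i^m$ corresponds to a maximal ideal of $R$ containing $M_i^m$, hence containing the prime $M_i$, hence equal to $M_i$. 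This proves the existence of a finite product decomposition into Artin local rings.

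Finally, for uniqueness up to isomorphism and order, I would argue with idempotents: in any expression $R\cong A_1\times\cdots\times A_r$ with each $A_j$ a local Artin ring, every $A_j$ has only the trivial idempotents $0$ and $1$, so the primitive idempotents of $R$ are exactly the images $e_1,\dots,e_r$ of the standard basis vectors; since these depend only on the ring $R$, the decomposition $R=Re_1\oplus\cdots\oplus Re_r$ with $Re_j\cong A_j$ is intrinsic, so $r=n$ and, after reindexing, $A_j\cong R/M_j^m$. The only genuinely non-formal step is (iii), the nilpotency of the nilradical — the minimal-ideal argument and the element identity $x=xy$ there; everything else is routine bookkeeping with comaximal ideals, the Chinese Remainder Theorem, and elementary facts about idempotents.
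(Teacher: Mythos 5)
The paper does not prove this theorem; it simply quotes it from Atiyah--MacDonald, and your sketch is precisely the standard argument given there (Chapter 8: primes are maximal, finitely many maximal ideals, nilpotent nilradical, then the Chinese Remainder Theorem applied to the comaximal powers $M_i^m$, with uniqueness via primitive idempotents). All the steps check out, so your proposal is correct and takes essentially the same route as the cited source.
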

\noindent Since every finite commutative ring $R$ is Artin, $R=R_1\times R_2\times\ldots\times R_n$, where $R_i$ is finite Artin local ring.

In \cite{Beck}, Beck defined, a ring $R$ is said to be coloring if 
$\chi(R)< \infty$ and a ring $R$ is chromatic ring if $R$ is coloring with $\chi(R)=\omega(R)$. Also he raised the following conjecture.

\noindent \textbf{Conjecture 1.2 (Beck)}. If $R$ is a coloring, then $R$ is a chromatic ring, that is, $\chi(R)=\omega(R)$.

He confirmed that the conjecture for several rings see \cite{Beck}. After four years latter, 1993, Anderson and Naseer \cite{And} disproved the above conjecture by giving a counter example as follows. They consider a local ring $R=\mathcal{Z}_4[X,Y,Z]/M$, where $M$ is the ideal generated by $\big\{X^2-2, Y^2-2, Z^2-2, Z^2, 2X, 2Y,2Z, XY, XZ, YZ-2\big\}$ and proved that 
$\omega(R)=5$ and $\chi(R)=6$. Note that, till date, this is the only one counter example of the Conjecture 1.2. In this paper, we provide infinitely many counter examples using the above local ring, see in the section 4.3. 

Also they have showed that many more rings supporting the Conjecture 1.2, see \cite{And}. Further, they showed that some of the results proved by Beck (in \cite{Beck}) are consequence of their results, see \cite{And}. In this paper, we show that these results are consequence of our results see the section 4.2.

\noindent We consider throughout this paper, all rings are coloring (not necessarily finite).

In the second section, we obtain the clique number of finite product of rings in terms of its factor rings. In the third section, we obtain bounds for chromatic number of finite product of rings in terms of its factors rings. In the forth section, some of the results of Beck's are consequence of our results. Also some main results of Anderson and Naseer \cite{And} are also consequence of our results. Furthermore, as a consequence our results, we construct infinitely many counter examples of the Beck's Conjecture 1.2.

\section{Clique number of $R$.}
We say that a subset $A$ of a ring $R$ is a clique in $R$ if $A$ is a complete subgraph of $R$ (that is, for all $x, y\in A,\ xy=0$) and a clique $A$ of $R$ is a maximum clique of $R$ if $\omega(R)=|A|$. 

For $1\leq i\leq n$, let $R_i$ be a ring and let $A_i$ be a maximum clique of $R_i$. 
Define $B_i=\{x_i\in A_i \ :\ x_i^2=0\}$ and $C_i=\{x_i\in A_i \ :\ x_i^2\neq 0\}$. Then $A_i=B_i\cup C_i$ and $B_i\cap C_i=\emptyset$. Let $\mathcal{E}_i=\{A_i \ : A_i$ is a maximum clique of $R_i\}$. Choose a maximum clique $\mathcal{A}_i=\mathcal{B}_i \cup \mathcal{C}_i$ of $R_i$ (that is, $\mathcal{A}_i\in \mathcal{E}_i$) with $|\mathcal{B}_i|=$max$_{A_i\in \mathcal{E}_i}\{|B_i| \ :\ B_i \subset A_i\}$, where $\mathcal{B}_i=\{x_i\in \mathcal{A}_i \ :\ x_i^2=0\}$ and $\mathcal{C}_i=\{x_i\in \mathcal{A}_i \ :\ x_i^2\neq 0\}$. 

\begin{thm}\label{1}
Let $\textbf{R}=R_1 \times \ R_2 \times\ldots \times R_n$, where $R_i$ is a ring, for $i,\ 1\leq i \leq n$. Then $\omega (\textbf{R})=\prod\limits_{i=1}^{n}(\omega (R_i)-|\mathcal{C}_i|)+\sum\limits_{i=1}^{n}(\omega (R_i)-|\mathcal{B}_i|)$.
\end{thm}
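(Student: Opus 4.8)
The plan is to prove the identity by establishing matching lower and upper bounds, writing $\mathbf{x}=(x_1,\dots,x_n)$ for a typical element of $\mathbf{R}$ and using that $\mathbf{x}\mathbf{y}=0$ in $\mathbf{R}$ exactly when $x_iy_i=0$ in $R_i$ for every $i$. For a clique $A$ of a ring write $A^{\circ}=\{x\in A:x^{2}=0\}$ for its set of square-zero elements, so $\mathcal{B}_i=\mathcal{A}_i^{\circ}$ and $|\mathcal{C}_i|=\omega(R_i)-|\mathcal{B}_i|$. For the lower bound I would exhibit an explicit clique: take $\mathcal{B}_1\times\mathcal{B}_2\times\cdots\times\mathcal{B}_n$ together with, for every $i$ and every $c\in\mathcal{C}_i$, the tuple $\mathbf{e}_i(c)$ that is $c$ in coordinate $i$ and $0$ elsewhere. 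Two tuples from $\mathcal{B}_1\times\cdots\times\mathcal{B}_n$ multiply to $0$ in every coordinate $i$, since there they either agree at some $b_i\in\mathcal{B}_i$ (and $b_i^{2}=0$) or differ at two elements of the clique $\mathcal{A}_i$ (whose product is $0$); each $\mathbf{e}_i(c)$ multiplies to $0$ against every element of $\mathcal{B}_1\times\cdots\times\mathcal{B}_n$ and against every $\mathbf{e}_j(c')$ because $c$ kills every element of $\mathcal{A}_i$. Since $0\in\mathcal{B}_i$ for all $i$ (any maximum clique of $R_i$ must contain $0$, which is adjacent to everything), these tuples are pairwise distinct, so $\omega(\mathbf{R})\ge\prod_i|\mathcal{B}_i|+\sum_i|\mathcal{C}_i|$.

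For the upper bound let $\mathcal{C}$ be a maximum clique of $\mathbf{R}$; we may assume $\mathbf{0}\in\mathcal{C}$. Call $\mathbf{x}\in\mathcal{C}$ \emph{nilpotent-type} if $x_i^{2}=0$ for every $i$, and \emph{defect-type} otherwise. If $\mathbf{x}$ is defect-type, choose a coordinate $i$ with $x_i^{2}\ne0$; then no other tuple of $\mathcal{C}$ has $i$-th coordinate $x_i$, since such a $\mathbf{y}\ne\mathbf{x}$ would force $x_iy_i=x_i^{2}\ne0$. Hence $\mathbf{x}\mapsto(i,x_i)$ is injective on the defect-type tuples, so their number is at most $\sum_i|Q_i^{1}|$, where $Q_i^{1}=\{v\in\pi_i(\mathcal{C}):v^{2}\ne0\}$ and $\pi_i$ is the $i$-th projection. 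The nilpotent-type tuples lie in $\Sigma_1\times\cdots\times\Sigma_n$, where $\Sigma_i$ is the set of $i$-th coordinates of nilpotent-type tuples; every element of $\Sigma_i$ squares to $0$ and distinct elements of $\Sigma_i$ annihilate each other (they come from distinct tuples of the clique $\mathcal{C}$), so $\Sigma_i$ is a square-zero clique of $R_i$ and there are at most $\prod_i|\Sigma_i|$ nilpotent-type tuples. Therefore $|\mathcal{C}|\le\prod_i|\Sigma_i|+\sum_i|Q_i^{1}|$. Finally $\Sigma_i\cup Q_i^{1}$ is a clique of $R_i$, being a subset of the clique $\pi_i(\mathcal{C})$, and its square-zero elements are exactly those of $\Sigma_i$, so $|\Sigma_i|+|Q_i^{1}|\le\omega(R_i)$, while $\mathbf{0}\in\mathcal{C}$ gives $0\in\Sigma_i$, hence $|\Sigma_i|\ge1$.

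It remains to show $\prod_i|\Sigma_i|+\sum_i|Q_i^{1}|\le\prod_i|\mathcal{B}_i|+\sum_i|\mathcal{C}_i|$, and here the crucial ingredient — the one I expect to be the main obstacle — is a lemma asserting that in a (coloring) ring the size of any square-zero clique is at most the number of square-zero elements of a maximum clique; applied to $R_i$ this gives $|\Sigma_i|\le|\mathcal{B}_i|$. I would attack this lemma by taking a maximal square-zero clique $S$ of $R_i$, noting that then $\{y\in\operatorname{Ann}(S):y^{2}=0\}=S$ where $\operatorname{Ann}(S)=\{y\in R_i:ys=0\ \forall s\in S\}$, extending $S$ to a maximal clique $A$ — which is forced to lie inside $\operatorname{Ann}(S)$ and hence to satisfy $A^{\circ}=S$ — and then arguing that such an $A$ is already a maximum clique of $R_i$; this last step is the delicate one, where one should exploit the standing hypothesis that the rings are coloring and the resulting control on the nilradical. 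Granting $|\Sigma_i|\le|\mathcal{B}_i|$, set $s_i=|\Sigma_i|$, so $1\le s_i\le|\mathcal{B}_i|$ and $|Q_i^{1}|\le\omega(R_i)-s_i$; then $|\mathcal{C}|\le\prod_i s_i+\sum_i(\omega(R_i)-s_i)$, and since $(s_1,\dots,s_n)\mapsto\prod_i s_i-\sum_i s_i$ is non-decreasing in each variable on $\{s_i\ge1\}$ — increasing $s_j$ by $1$ changes it by $\prod_{i\ne j}s_i-1\ge0$ — the right-hand side is largest when each $s_i=|\mathcal{B}_i|$, giving $|\mathcal{C}|\le\prod_i|\mathcal{B}_i|+\sum_i(\omega(R_i)-|\mathcal{B}_i|)$. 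Combining this with the lower bound and substituting $|\mathcal{B}_i|=\omega(R_i)-|\mathcal{C}_i|$ in the product and $|\mathcal{C}_i|=\omega(R_i)-|\mathcal{B}_i|$ in the sum yields the stated formula; essentially all the work is in the square-zero-clique lemma, the remainder being bookkeeping with projections and one monotonicity estimate.
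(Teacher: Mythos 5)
Your lower bound is exactly the paper's construction, and your upper bound follows the same basic dichotomy as the paper's Claims 2 and 3 (tuples all of whose coordinates square to zero versus tuples with a coordinate of nonzero square), with a correct injection argument for the latter and a correct product bound for the former. Your monotonicity computation showing that $\prod_i s_i-\sum_i s_i$ is non\-decreasing in each $s_i\ge 1$ is a genuine improvement in care over the paper, which simply asserts the corresponding inequality $\prod_i|B_i|+\sum_i|C_i|\le\prod_i|\mathcal{B}_i|+\sum_i|\mathcal{C}_i|$ without justification.

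However, there is a genuine gap, and it is exactly where you flagged it: the lemma that every square-zero clique of $R_i$ has size at most $|\mathcal{B}_i|$. You do not prove it, and it cannot be waved away, because it is essentially equivalent to a special case of the theorem itself: if some $R_1$ had a square-zero clique $S$ with $|S|>|\mathcal{B}_1|$, then $S^n$ would be a clique of $R_1^n$ of size $|S|^n$, which for large $n$ exceeds $|\mathcal{B}_1|^n+n|\mathcal{C}_1|$, so the claimed formula would fail. Your sketched attack also does not obviously close it: extending a maximal square-zero clique $S$ to a maximal clique $A$ with $A^{\circ}=S$ is fine, but the assertion that this $A$ is a \emph{maximum} clique is precisely the hard content (a maximal clique containing $S$ need not be maximum, and the hypothesis that $R_i$ is a coloring gives finiteness of $\chi$ but no evident structural control here). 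Note that $\mathcal{B}_i$ is defined as a maximum only over \emph{maximum} cliques of $R_i$, so your $\Sigma_i$ --- which is merely some square-zero clique extracted from coordinates of a clique of $\mathbf{R}$ --- is not covered by that definition. The paper sidesteps this by arguing (its Claim~1) that a maximum clique $\mathbf{D}$ of $\mathbf{R}$ must contain an embedded \emph{maximum} clique $A_i$ of $R_i$ in each coordinate slot and that every coordinate of every element of $\mathbf{D}$ then lies in $A_i$; this makes $\Sigma_i\cup Q_i^1$ (in your notation) the full maximum clique $A_i$, so $|\Sigma_i|=|B_i|\le|\mathcal{B}_i|$ follows from the definition of $\mathcal{B}_i$. (The paper's own justification of Claim~1 is itself rather thin, but that is the route it takes.) To complete your proof you must either prove the square-zero-clique lemma outright or establish something like the paper's Claim~1; as it stands the chain $\prod_i|\Sigma_i|+\sum_i|Q_i^1|\le\prod_i|\mathcal{B}_i|+\sum_i|\mathcal{C}_i|$ is unsupported at its first link.
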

\begin{proof} For, $1\leq i\leq n$, let $\mathcal{A}_i=\mathcal{B}_i  \cup \mathcal{C}_i$ be a maximum clique in $R_i$ with the above properties. 
Let $\mathcal{B}=\mathcal{B}_1\times \mathcal{B}_2\times \ldots \times \mathcal{B}_n$ and  let $\mathcal{S}_i=\{0\}\times\ldots \times \{0\}\times \mathcal{C}_i \times \{0\}\times \ldots \times \{0\}$, for $1\leq i\leq n$. Then $\mathcal{B} \cup \big(\bigcup{\mathcal{S}_i}\big)$ is a clique in $\textbf{R}$ and hence $\omega(\textbf{R})\geq \prod\limits_{i=1}^n\big(\omega (R_i)-|\mathcal{C}_i|\big)+\sum\limits_{i=1}^{n}\big(\omega (R_i)-|\mathcal{B}_i|\big)$. 
 
Let $\textbf{D}$ be a maximum clique of $\textbf{R}$. Then $\textbf{0} \in \textbf{D}$, where $\textbf{0}=(0,0,\ldots,0)$ and $|\textbf{D}|=\omega(\textbf{R})$. 

\noindent \textbf{Claim 1.} For $1\leq i\leq n$, there is a maximum clique $A_i$ of $R_i$ such that $\{0\} \times \ldots \times \{0\}\times A_i\times \{0\} \times \ldots \times \{0\}\subset \textbf{D}$.

\noindent Note that if for some $i$, $1\leq i\leq n$, with $\big(R_1 \times \ldots \times R_{i-1}\times \big(R_i-\{0\}\big)\times R_{i+1} \times \ldots \times R_n\big)\cap \textbf{D}=\emptyset$, then $\textbf{D}$ would not be a maximum clique in $\textbf{R}$. Therefore, for each $i,\ 1\leq i\leq n$, there exists a non-zero element $t_i\in R_i$ such that $(x_1,\ldots,x_{i-1},t_i,x_{i+1},\ldots, x_n)\in \textbf{D}$, for some $x_j\in R_j$, for all $j\neq i$. 

\noindent \textbf{Case 1.} $R_i$ has zero-divisors.

Let $s_i\neq 0\in R_i$ be a zero-divisor of $R_i$. Then there exists a non-zero element $r_i\in R_i$ such that $s_ir_i$=0. Suppose if $t_i$ is not a zero-divisor of $R_i$, then $\big(R_1 \times \ldots \times R_{i-1}\times \big(R_i-\{0\}\big)\times R_{i+1} \times \ldots \times R_n\big)\cap \textbf{D}=\{(x_1,\ldots,t_i,\ldots, x_n)\}$ and hence $(0,\ldots,s_i,\ldots, 0)$ and $(0,\ldots,r_i,\ldots, 0)$ do not belong to $\textbf{D}$ and they are adjacent to every elements of $\textbf{D}- \{(x_1,\ldots,t_i,\ldots, x_n)\}$. Therefore, $\big(\textbf{D}-\{(x_1,\ldots,t_i,\ldots, x_n)\}\big)\cup \{(0,\ldots,s_i,\ldots, 0),$ \\
$(0,\ldots,r_i,\ldots, 0)\big\}$ forms a clique in $\textbf{R}$, which is a contradiction to $|\textbf{D}|=\omega(\textbf{R})$. Thus, $t_i$ should be a zero-divisor of $R_i$. Hence $\{0\} \times \ldots \times \{0\}\times A_i\times \{0\} \times \ldots \times \{0\}\subset \textbf{D}$, for some maximum clique $A_i$ of $R_i$ containing $t_i$, (for otherwise, $\textbf{D}$ would not be a maximum clique in $\textbf{R}$). 

\noindent \textbf{Case 2.} $R_i$ has no zero-divisors.

\noindent Then $t_i$ is not a zero-divisor in $R_i$ and $\big(R_1 \times \ldots \times R_{i-1}\times \{t_i\}\times R_{i+1} \times \ldots \times R_n\big)\cap \textbf{D}=\{(x_1,\ldots,x_{i-1},t_i,x_{i+1},\ldots, x_n)\}$. In this case we replace the element $(x_1,\ldots,x_{i-1},t_i,x_{i+1},$

\noindent  $\ldots, x_n)$ in $\textbf{D}$ by $(0,\ldots,0,t_i,0,\ldots, 0)$. Then the resulting set $\textbf{D}$ is also a maximum clique in $\textbf{R}$ such that $\{0\} \times \ldots \times \{0\}\times A_i\times \{0\} \times \ldots \times \{0\}\subset \textbf{D}$, for some maximum clique $A_i$ of $R_i$ containing $t_i$ (In this case $|A_i|$=2). 

Hence the Claim 1. 
Note that if $\textbf{b}=(b_1,\ldots,b_n)\in \textbf{D}$, then $b_i \in A_i$, for $1\leq i\leq n$ (else $A_i$ would not be a maximum clique in $R_i$ by Claim 1). As we defined earlier, for $1\leq i \leq n, B_i=\{b_i\in A_i : b_i^2=0\}$, and $C_i=\{c_i\in A_i : c_i^2 \neq 0\}$.  

\noindent Let $\textbf{B}=B_1\times B_2\times \ldots \times B_n$, and $\textbf{C}=\bigcup\limits_{i=1}^n S_i$, where $S_i=\{(0,\ldots,0,c_i,0,\ldots,0) : c_i\in C_i\}$. Then clearly, $\textbf{B}\cup \textbf{C}\subseteq \textbf{D}$. Let $\textbf{a}=(a_1,\ldots,a_n)$ be a non-zero element in $\textbf{D}$.


\noindent \textbf{Claim 2.} If $a_i^2\neq 0$, for some $i$, then $a_j= 0$, for all $j\neq i$, that is $\textbf{a}\in \textbf{C}$. 

\noindent Clearly, $a_i\neq 0$. Suppose for some $j\neq i, a_j\neq 0$. Since $\textbf{a}\in \textbf{D}$ and $(0,\ldots,0,a_i,0,\ldots,0)\neq \textbf{a}\neq \textbf{0}, \textbf{a}=(a_1,\ldots,a_i,\ldots,a_n)$ is adjacent to $(0,\ldots,0,a_i,0,\ldots,0)$ and hence $a_i^2=0$, a contradiction.


\noindent \textbf{Claim 3.} If $a_i^2= 0$ and $a_i\neq 0$, for some $i$, then $a_j^2= 0$, for all $j, 1\leq j\leq n$, that is $\textbf{a}\in \textbf{B}$.

If not, there exists $j\neq i$ with $a_j^2\neq 0$ and hence by Claim 2, we have $a_{\ell}= 0$, for all $\ell\neq j$. In particular, $a_i=0$, a contradiction. 


Therefore by Claim 2 and 3, $\textbf{D}=\textbf{B}\cup \textbf{C}$. 
Since $\textbf{B}\cap \textbf{C} = \emptyset$, $|\textbf{D}|=|\textbf{B}|+|\textbf{C}|= \prod\limits_{i=1}^n|B_i|+\sum\limits_{i=1}^n|C_i|\leq \prod\limits_{i=1}^n|\mathcal{B}_i|+\sum\limits_{i=1}^n|\mathcal{C}_i|= \prod\limits_{i=1}^{n}(\omega (R_i)-|\mathcal{C}_i|)+\sum\limits_{i=1}^{n}(\omega (R_i)-|\mathcal{B}_i|) \leq \omega (\textbf{R})$, because $\mathcal{E}_i$ is the collection of maximum clique in $R_i$ containing $A_i$ and $\mathcal{A}_i$, for $1\leq i\leq n$. 
\end{proof}

\begin{cor}
If $\textbf{R}$ is finite ring, then $R=R_1\times R_2\times\ldots\times R_n$, where $R_i$ is a finite local ring and hence $\omega (\textbf{R})=\prod\limits_{i=1}^{n}(\omega (R_i)-|\mathcal{C}_i|)+\sum\limits_{i=1}^{n}(\omega (R_i)-|\mathcal{B}_i|)$.
\end{cor}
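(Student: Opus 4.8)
The plan is to recognize this corollary as the immediate specialization of Theorem \ref{1} to finite rings, with the decomposition into local factors supplied by the structure theorem. First I would observe that every finite commutative ring $R$ is Artin: since $R$ is finite it has only finitely many ideals, so there can be no infinite strictly descending chain of ideals and the descending chain condition holds automatically. Hence the Structure theorem for Artin rings (Theorem 1.1) applies and produces a ring isomorphism $R\cong R_1\times R_2\times\cdots\times R_n$ with each $R_i$ an Artin local ring; each $R_i$ is a homomorphic image of the finite ring $R$ under the coordinate projection, hence finite, so each $R_i$ is a finite local ring. This proves the first assertion of the corollary.

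For the clique-number formula I would first note that Beck's graph is an isomorphism invariant of the ring: a ring isomorphism $\varphi\colon R\to R_1\times\cdots\times R_n$ satisfies $xy=0$ in $R$ if and only if $\varphi(x)\varphi(y)=\textbf{0}$ in the product, so $\varphi$ is an isomorphism of the associated graphs and in particular $\omega(R)=\omega(R_1\times\cdots\times R_n)$. It therefore suffices to evaluate the right-hand side for the product $R_1\times\cdots\times R_n$, which is exactly what Theorem \ref{1} does: applying it with the $n$ finite local factors $R_i$ yields
\[
\omega(R)=\prod_{i=1}^{n}\bigl(\omega(R_i)-|\mathcal{C}_i|\bigr)+\sum_{i=1}^{n}\bigl(\omega(R_i)-|\mathcal{B}_i|\bigr),
\]
where $\mathcal{A}_i=\mathcal{B}_i\cup\mathcal{C}_i$ is the maximum clique of $R_i$ chosen as in the paragraph preceding Theorem \ref{1}; such a choice exists because $R_i$ is finite, so one is merely maximizing $|\mathcal{B}_i|$ over a finite collection of finite cliques.

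There is no genuine obstacle here — the entire content resides in Theorem \ref{1}, and this corollary only records its specialization to finite rings via the structure theorem. The one point that needs care is purely notational: the quantities $\mathcal{B}_i$ and $\mathcal{C}_i$ in the statement refer to the selected maximum cliques of the \emph{local factors} $R_i$, not of $R$ itself, and the right-hand side is a genuine finite number precisely because each finite $R_i$ has finite clique number.
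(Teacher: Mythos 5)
Your proposal is correct and matches the paper's (implicit) argument exactly: the paper offers no separate proof for this corollary, treating it as the immediate combination of the structure theorem for Artin rings (finite $\Rightarrow$ Artin $\Rightarrow$ product of finite local rings) with Theorem \ref{1}, which is precisely what you do. Your added remarks on graph-isomorphism invariance and the existence of the maximizing choice of $\mathcal{B}_i$ are sensible bookkeeping that the paper leaves tacit.
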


\section{Chromatic number of $R$}

In \cite{Beck}, Beck observed that, 
$\chi(R_1\times R_2)\geq \chi(R_1)+\chi(R_2)-1$. In this section, we obtain an upper bound of $R_1\times R_2$ in terms of chromatic numbers of $R_1$ and $R_2$. 

For any two rings $R_1$ and $R_2$, let $U_1,U_2,\ldots, U_{k_1}$ be a proper coloring of $R_1$, where $k_1=\chi(R_1)$ and let $V_1,V_2,\ldots, V_{k_2}$ be a proper coloring of $R_2$, where $k_2=\chi(R_2)$. Since $0\in R_1$ and $0\in R_2$, there exist largest positive integers $s_1, 1\leq s_1 \leq k_1$ and $s_2, 1\leq s_2 \leq k_2$ such that for $1\leq \ell \leq s_1, x_{\ell}^2=0$ for some $x_{\ell}\in U_{i_{\ell}}$ and $1\leq r \leq s_2, y_r^2=0$ for some $y_r\in V_{j_r}$.

\begin{thm}\label{2}
Let $R_1$ and $R_2$ be any two rings with above properties, then $\chi(R_1)+\chi(R_2)-1\leq \chi(R_1\times R_2)\leq (\chi(R_1)-s_1)+(\chi(R_2)-s_2)+s_1s_2$. 
\end{thm}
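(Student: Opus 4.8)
The plan is to establish the two inequalities separately, the lower bound being the easy (essentially known) direction and the upper bound requiring an explicit construction of a proper coloring of $R_1\times R_2$.

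For the lower bound $\chi(R_1)+\chi(R_2)-1\le\chi(R_1\times R_2)$, I would exhibit inside $R_1\times R_2$ a subgraph whose chromatic number is already $\chi(R_1)+\chi(R_2)-1$; the cleanest way is to observe that the induced subgraph on $(R_1\times\{0\})\cup(\{0\}\times R_2)$ contains disjoint copies of the graphs $R_1$ and $R_2$ glued at the single vertex $\mathbf 0$, and every vertex of the first copy is adjacent (via the product being zero in each coordinate) to every vertex of the second copy. Two color classes, one from each side, can only be merged if they both consist solely of the shared vertex $\mathbf 0$, so at most one merge is possible; hence $\chi$ of this subgraph is $\chi(R_1)+\chi(R_2)-1$, and monotonicity of $\chi$ under subgraphs finishes this half. (This is the inequality Beck already observed, cited just before the statement, so I would keep the argument brief.)

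For the upper bound I would produce an explicit coloring. Take the given optimal colorings $U_1,\dots,U_{k_1}$ of $R_1$ and $V_1,\dots,V_{k_2}$ of $R_2$, indexed so that $U_1,\dots,U_{s_1}$ are exactly the classes containing a square-zero element and likewise $V_1,\dots,V_{s_2}$. The idea is: for an element $(x,y)\in R_1\times R_2$, the pair is "dangerous" only if its coordinates can simultaneously be zeroed, which forces $x$ into one of the first $s_1$ classes and $y$ into one of the first $s_2$ classes. So I would color $(x,y)$ by: if $x\in U_i$ with $i>s_1$, use a color depending only on $i$ (that's $\chi(R_1)-s_1$ colors); else if $y\in V_j$ with $j>s_2$, use a color depending only on $j$ (another $\chi(R_2)-s_2$ colors); otherwise $x\in U_i$, $y\in V_j$ with $i\le s_1$, $j\le s_2$, and we use a color depending on the pair $(i,j)$ (the $s_1s_2$ remaining colors). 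The total is $(\chi(R_1)-s_1)+(\chi(R_2)-s_2)+s_1s_2$, matching the claimed bound.

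The main obstacle — and the step I would spend the real work on — is verifying that this is a proper coloring, i.e. that any two adjacent vertices $(x,y),(x',y')$ with $(x,y)(x',y')=\mathbf 0$ get different colors. Suppose not; analyse by which of the three color types they share. If both lie in the first type, then $x,x'$ are in the same class $U_i$ with $i>s_1$ but adjacency forces $xx'=0$ in $R_1$, contradicting that $U_i$ is independent in $R_1$. The second type is symmetric. If both lie in the third type with the same $(i,j)$, then $xx'=0$ and $yy'=0$ put $x,x'$ in the same independent class $U_i$ and $y,y'$ in the same independent class $V_j$ — again impossible unless $x=x'$ and $y=y'$. The one genuinely delicate point is to check the \emph{definition} of $s_1,s_2$ is actually used: one must confirm that the case split is exhaustive and that no adjacent pair escapes into "the wrong branch" — in particular that when we land in the third branch, $i\le s_1$ and $j\le s_2$ really does hold for both vertices, which is automatic from how the branch is entered, and that adjacency never forces a square-zero coordinate to appear in a class indexed beyond $s_1$ or $s_2$ in a way that breaks the count; here the maximality in the choice of $s_1,s_2$ is what guarantees the index ranges are set up consistently. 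I expect the write-up to be short once the coloring is written down correctly, with the bulk of the care going into stating the three-way color assignment unambiguously.
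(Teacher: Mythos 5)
Your proposal is correct and follows essentially the same route as the paper: the identical three-branch coloring (one color per pair of square-zero-containing classes, one color per remaining class of $R_2$, one color per remaining class of $R_1$), the same count of $s_1s_2+(\chi(R_1)-s_1)+(\chi(R_2)-s_2)$ colors, and the same properness check, with the lower bound taken from Beck's observation. The one point worth making explicit in the write-up is the $x=x'$ (resp.\ $y=y'$) subcase in your first two types, where the contradiction comes not from independence but from $x^2=0$ forcing $x$ into a class of index at most $s_1$ --- a point you do flag in your final paragraph.
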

\begin{proof}
Without loss generality, in $R_1$, let us assume that there is some $x_j\in U_j$ with $x_j^2$=0, for $1\leq j\leq s_1$ and there is no $x_j\in U_j$ with $x_j^2=0$, for $s_1+1\leq j\leq k_1$. Also we may assume that, in $R_2$, there is some $y_j\in V_j$ with $y_j^2=0$, for $1\leq j\leq s_2$ and there is no $y_j\in V_j$ with $y_j^2=0$, for $s_2+1\leq j\leq k_2$.  We now start the coloring. 

\noindent For $1\leq i\leq s_1$ and $1\leq j \leq s_2$, define  

 $c(x,y)=s_2(i-1)+j$, if $x\in U_i$ and $y\in V_j$. 

\noindent Next, for $1\leq i\leq s_1$ and $1\leq j \leq k_2-s_2$, define 

 $c(x,y)=s_1s_2+j$, if $x\in U_i$ and $y\in V_{s_2+j}$. 

\noindent Finally, for $1\leq i\leq k_1-s_1$, define 

 $c(x,y)=s_1s_2+(k_2-s_2)+i$, if $x\in U_{s_1+i}$ and $y\in R_2	$. 

\noindent Clearly, the function $c$ is a coloring of $R_1\times R_2$ onto $\{1,2,\ldots, s_1s_2+(k_1-s_1)+(k_2-s_2)\}$. 

\noindent To show $c$ is proper. Let $(x,y), (a,b)\in R_1\times R_2$ such that $(x,y)$ is adjacent to $(a,b)$, then $xa=0$ and $by=0$ and hence $x\in U_i$, $a\in U_j$, for some $1\leq i, j\leq k_1$ and $b\in V_{i^|}$ and $y\in V_{j^|}$ for some $1\leq i^|, j^| \leq k_2$. \\
\textbf{Case 1.} $x\neq a$.\\
Then $U_i\neq U_j$, otherwise $U_i$ would not be an independent subset of $R_1$. Note that the coloring of the vertices in $U_i\times \big(\bigcup\limits_{\ell=1}^{s_2}V_{\ell}\big)$ are different from the coloring of the vertices in $U_j\times \big(\bigcup\limits_{\ell=1}^{s_2}V_{\ell}\big)$. Also note that, for $s_2\leq \ell\leq k_2$, no two vertices in $\big(U_i\times V_{\ell}\big) \cup \big(U_j\times V_{\ell}\big)$ are adjacent. Further note that, for $s_2\leq \ell_1, \ell_2 \leq k_2$ with $\ell_1\neq \ell_2$, the coloring of the vertices in $U_i\times V_{\ell_1}$ are different from the coloring of the vertices in $U_j\times V_{\ell_2}$. Hence $c(x,y)\neq c(a,b)$.\\
\textbf{Case 2.} $x=a$.\\
Then $x^2=0$ and $U_i=U_j$. 
Clearly $y\neq b$ (otherwise $(x,y)=(a,b)$) and $y\in V_{i^{|}}$ and $b\in V_{j^{|}}$ where $i^{|}\neq j^{|}$ and then $c(x,y)\neq c(x,b)$. \\
In the similar way we can prove $c(x,y)\neq c(x,b)$ when $b=y$ or $b\neq y$. 
Thus $c$ is a proper coloring of $R_1\times R_2$. 
Hence $\chi(R_1\times R_2)\leq s_1s_2+(k_1-s_1)+(k_2-s_2)$.
\end{proof}


\noindent By induction on $n\geq 3$, we have, 

\begin{cor}\label{3}
For $1\leq i\leq n$, let $U^i_1,U^i_2,\ldots, U^i_{k_i}$ be a proper coloring of $R_i$ with $k_i=\chi(R_i)$. If for $1\leq i\leq n$, there is the largest positive integer $s_i, 1\leq s_i\leq k_i$ such that for $1\leq j\leq s_i, x_j^2=0$ for some $x_j\in U^i_{m_j}$. Then $\sum\limits_{i=1}^n\chi(R_i)-(n-1)\leq \chi(R_1\times R_2\times\ldots \times R_n)\leq \sum\limits_{i=1}^n(\chi(R_i)-s_i)+s_1s_2\ldots s_n$.

\end{cor}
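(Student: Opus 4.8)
The plan is to obtain Corollary~\ref{3} by induction on $n$, using Theorem~\ref{2} as the base case $n=2$ and as the inductive step, but one must be slightly careful because the statement of Theorem~\ref{2} records the parameter $s_i$ as a property of the \emph{individual} ring $R_i$, whereas when we apply it to a product $R_1\times\cdots\times R_{n-1}$ we need to know the corresponding "$s$" value for that product. So the first thing I would do is make the following observation explicit: if $R=R_1\times\cdots\times R_{m}$ and each $R_i$ has the property that its given proper coloring $U^i_1,\dots,U^i_{k_i}$ admits a square-zero element in exactly the first $s_i$ classes (after reordering), then the coloring of $R$ produced by (iterating) Theorem~\ref{2} has square-zero elements in exactly $s_1s_2\cdots s_m + \sum_{i=1}^{m}(k_i - s_i)$ of its $\prod$-many\,--\,well, of its $\sum_{i=1}^m(\chi(R_i)-s_i)+s_1\cdots s_m$ colour classes. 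Indeed, in the coloring $c$ built in the proof of Theorem~\ref{2}, a pair $(x,y)$ with $c(x,y)=s_2(i-1)+j$ (the "$s_1s_2$ block") squares to $(x^2,y^2)=(0,0)$ exactly when $x$ lies in a square-zero class of $R_1$ and $y$ in a square-zero class of $R_2$, which is every one of those $s_1s_2$ classes; in the remaining $(k_1-s_1)+(k_2-s_2)$ classes one factor is forced to be non-square-zero in its own coloring, so no element of that class squares to zero. Hence the "$s$-parameter'' is multiplicative-plus-additive in exactly the shape appearing on the right-hand side, and in particular it telescopes.

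With that bookkeeping lemma in hand, the induction is routine. For the lower bound, $\sum_{i=1}^n\chi(R_i)-(n-1)\le\chi(R_1\times\cdots\times R_n)$ follows immediately from the lower bound in Theorem~\ref{2} and induction: write $R_1\times\cdots\times R_n=(R_1\times\cdots\times R_{n-1})\times R_n$, apply Theorem~\ref{2} to get $\chi\big((R_1\times\cdots\times R_{n-1})\times R_n\big)\ge \chi(R_1\times\cdots\times R_{n-1})+\chi(R_n)-1$, and plug in the inductive hypothesis $\chi(R_1\times\cdots\times R_{n-1})\ge\sum_{i=1}^{n-1}\chi(R_i)-(n-2)$. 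For the upper bound, set $R'=R_1\times\cdots\times R_{n-1}$, let $k'=\chi(R')$ and let $s'$ be the number of square-zero colour classes of the coloring of $R'$ supplied by the induction; by the inductive hypothesis (upper bound) $k'\le\sum_{i=1}^{n-1}(\chi(R_i)-s_i)+s_1\cdots s_{n-1}$, and by the bookkeeping lemma we may take $s'=s_1\cdots s_{n-1}+\sum_{i=1}^{n-1}(\chi(R_i)-s_i)$, so that $k'-s'=0$\,--\,a pleasant simplification, but in any case the argument only uses $k'-s'\le\sum_{i=1}^{n-1}(\chi(R_i)-s_i)$. Now Theorem~\ref{2} applied to the pair $(R',R_n)$ gives
\[
\chi(R_1\times\cdots\times R_n)=\chi(R'\times R_n)\le (k'-s')+(\chi(R_n)-s_n)+s's_n.
\]
Substituting the estimates for $k'-s'$ and $s'$ and simplifying (note $s's_n=s_1s_2\cdots s_n + s_n\sum_{i=1}^{n-1}(\chi(R_i)-s_i)$, and combine like terms) yields $\chi(R_1\times\cdots\times R_n)\le\sum_{i=1}^n(\chi(R_i)-s_i)+s_1s_2\cdots s_n$, completing the induction.

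The main obstacle, such as it is, is precisely the bookkeeping step: Theorem~\ref{2} as literally stated tells you the chromatic number of a product but not the structure (the location of square-zero colour classes) of the coloring it produces, and the induction genuinely needs that structural information fed back in. Once one checks that the coloring $c$ in the proof of Theorem~\ref{2} keeps the square-zero classes grouped into an $s_1s_2$-block at the front and that the other classes contain no square-zero element, everything else is a mechanical unwinding of the two inequalities, so I would present the bookkeeping observation as a short preliminary remark and then write the induction in a few lines. One should also remark at the outset that the hypothesis of the corollary\,--\,that each $R_i$ has a largest such $s_i$\,--\,is automatic since $0\in U^i_{m}$ for whichever class contains $0$, and after relabelling we may assume the square-zero classes are $U^i_1,\dots,U^i_{s_i}$, matching the normalisation used in the proof of Theorem~\ref{2}.
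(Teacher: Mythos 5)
Your overall strategy---iterate the construction from Theorem \ref{2} and track how many colour classes of the product colouring contain a square-zero element---is exactly what the induction needs (the paper itself only writes ``by induction on $n\ge 3$'' and omits this bookkeeping entirely). But your bookkeeping lemma as stated is wrong, and the error propagates into your final computation. You claim the product colouring has square-zero elements in $s_1\cdots s_m+\sum_{i}(k_i-s_i)$ of its classes, i.e.\ in \emph{all} of them, and you later set $s'=s_1\cdots s_{n-1}+\sum_{i=1}^{n-1}(\chi(R_i)-s_i)$ so that $k'-s'=0$. Yet your own justification in the same sentence correctly shows the opposite: $(x,y)$ squares to zero iff $x^2=0$ \emph{and} $y^2=0$, so only the $s_1s_2$ classes of the first block contain a square-zero element, while the remaining $(k_1-s_1)+(k_2-s_2)$ classes contain none. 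The correct parameter for the product is $s'=s_1s_2\cdots s_{n-1}$.

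With your value of $s'$ the algebra does not close: Theorem \ref{2} would give $\chi(R'\times R_n)\le 0+(\chi(R_n)-s_n)+s's_n$, and since $s's_n=s_1\cdots s_n+s_n\sum_{i=1}^{n-1}(\chi(R_i)-s_i)$, the extra term $s_n\sum_{i=1}^{n-1}(\chi(R_i)-s_i)$ strictly exceeds $\sum_{i=1}^{n-1}(\chi(R_i)-s_i)$ whenever $s_n\ge 2$ and some $\chi(R_i)>s_i$, so the claimed bound $\sum_{i=1}^n(\chi(R_i)-s_i)+s_1\cdots s_n$ does not follow; your parenthetical ``the argument only uses $k'-s'\le\sum(\chi(R_i)-s_i)$'' does not rescue this, because the term $s's_n$ also depends on $s'$. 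Taking instead the correct $s'=s_1\cdots s_{n-1}$ together with $k'-s'\le\sum_{i=1}^{n-1}(\chi(R_i)-s_i)$ from the inductive hypothesis yields $(k'-s')+(\chi(R_n)-s_n)+s's_n\le\sum_{i=1}^{n}(\chi(R_i)-s_i)+s_1\cdots s_n$ exactly as required; the lower bound and the rest of your argument are fine. (One further small point: the colouring of $R'$ you feed into the next step need not use exactly $\chi(R')$ colours, so you should apply the \emph{construction} in the proof of Theorem \ref{2}, which works for any proper colourings of the two factors, rather than the literal statement of the theorem; your word ``iterating'' suggests you intend this, but it is worth saying explicitly.)
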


\noindent Suppose no $x\in R_i$ with $x\neq 0$ and $x^2=0$ (that is, $R_i$ is a non-zero reduced ring), for all $1\leq i\leq n$, then $s_i=1$ and we have,

\begin{cor}\label{4}
If no $x\in R_i$ with $x\neq 0$ and $x^2=0$, (that is, $R_i$ is non-zero reduced ring), for $1\leq i\leq n$, then 
$\chi(R_1\times R_2\times\ldots \times R_n)=\sum\limits_{i=1}^n\chi(R_i)-(n-1)$.
\end{cor}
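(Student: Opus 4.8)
The plan is to derive Corollary \ref{4} as a direct specialization of Corollary \ref{3}. The key observation is that the integers $s_i$ in Corollary \ref{3} are defined as the largest $s_i$ with $1\leq s_i\leq k_i$ such that for all $1\leq j\leq s_i$ there is some $x_j\in U^i_{m_j}$ with $x_j^2=0$. When $R_i$ is a non-zero reduced ring, the only element $x$ with $x^2=0$ is $x=0$ itself. Thus the only color class of $R_i$ that can contain an element whose square is zero is the one containing $0$; every other class $U^i_j$ consists entirely of elements with $x^2\neq 0$. Since the $x_j^2=0$ condition must hold for \emph{all} $j$ from $1$ up to $s_i$, and already $x_2^2=0$ fails for any color class not containing $0$, we are forced to have $s_i=1$ (with the single class $U^i_{m_1}$ being the one containing $0$). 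Note that $s_i\geq 1$ always, since $0\in R_i$ lies in some color class; hence $s_i=1$ exactly.

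With $s_i=1$ for every $i$, substitute into the conclusion of Corollary \ref{3}. The upper bound becomes $\sum_{i=1}^n(\chi(R_i)-s_i)+s_1s_2\cdots s_n=\sum_{i=1}^n(\chi(R_i)-1)+1=\sum_{i=1}^n\chi(R_i)-(n-1)$. The lower bound in Corollary \ref{3} is $\sum_{i=1}^n\chi(R_i)-(n-1)$ regardless of the $s_i$. Since the upper and lower bounds now coincide, we conclude $\chi(R_1\times R_2\times\cdots\times R_n)=\sum_{i=1}^n\chi(R_i)-(n-1)$, as claimed.

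The only point requiring a little care — and the one place a reader might object — is the claim that $s_i$ is forced to equal $1$ rather than merely being bounded by something. One should make explicit that ``reduced'' means the nilradical is $\{0\}$, so in particular no nonzero element squares to zero, and that therefore $U^i_j$ for $j\neq m_1$ cannot contain any element with square zero; combined with the ``for all $1\leq j\leq s_i$'' quantifier in the definition of $s_i$, this pins down $s_i=1$. Everything else is an immediate arithmetic substitution into Corollary \ref{3}, so there is no real obstacle; the proof is essentially one line once the value of each $s_i$ is identified.
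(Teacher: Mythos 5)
Your proposal is correct and is exactly the paper's argument: the paper simply notes that reducedness forces $s_i=1$ for each $i$ (only the color class containing $0$ can contain an element of square zero) and then reads off the coincidence of the upper bound $\sum_{i=1}^n(\chi(R_i)-s_i)+s_1\cdots s_n$ with the lower bound $\sum_{i=1}^n\chi(R_i)-(n-1)$ from Corollary~\ref{3}. Your added care in justifying $s_i=1$ from the quantifier in the definition of $s_i$ is a welcome elaboration but does not change the route.
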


\begin{cor}
If $\textbf{R}$ is finite ring, then $R=R_1\times R_2\times\ldots\times R_n$, where $R_i$ is a finite local ring and hence $\sum\limits_{i=1}^n\chi(R_i)-(n-1)\leq \chi(R_1\times R_2\times\ldots \times R_n)\leq \sum\limits_{i=1}^n(\chi(R_i)-s_i)+s_1s_2\ldots s_n$.
\end{cor}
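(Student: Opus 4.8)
The plan is to read this corollary off as a specialization of Corollary~\ref{3}, after invoking the structure theory of finite commutative rings. First I would note that any finite commutative ring with unity $\textbf{R}$ is Artin, since it has only finitely many ideals and hence trivially satisfies the descending chain condition on ideals; the Structure theorem for Artin rings stated above then produces a decomposition $\textbf{R}\cong R_1\times R_2\times\cdots\times R_n$ in which each $R_i$ is an Artin local ring. Each $R_i$ is itself finite, being the image of $\textbf{R}$ under the surjective coordinate projection, so in particular $\chi(R_i)=k_i<\infty$.

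Next I would verify that the hypotheses of Corollary~\ref{3} hold for this decomposition. Fix, for each $i$, a proper coloring $U^i_1,\ldots,U^i_{k_i}$ of $R_i$. Since $0\in R_i$ lies in some color class and $0^2=0$, there is at least one index $j$ admitting an $x_j\in U^i_{m_j}$ with $x_j^2=0$; consequently the largest such integer $s_i$ is well defined and satisfies $1\leq s_i\leq k_i$. Corollary~\ref{3} now applies verbatim to $R_1\times\cdots\times R_n$ and gives
\[
\sum_{i=1}^n\chi(R_i)-(n-1)\ \leq\ \chi(R_1\times R_2\times\cdots\times R_n)\ \leq\ \sum_{i=1}^n\bigl(\chi(R_i)-s_i\bigr)+s_1s_2\cdots s_n,
\]
which, together with the identification $\chi(\textbf{R})=\chi(R_1\times\cdots\times R_n)$ coming from the isomorphism above, is precisely the claimed bound.

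I do not anticipate a genuine obstacle here: the whole argument is essentially a two-line deduction from the Structure theorem and Corollary~\ref{3}. The only points one should be careful not to skip are the trivial verifications that each factor $R_i$ is finite (so that every quantity appearing in the bound is a finite nonnegative integer) and that each $s_i$ exists because $0$ is always a vertex whose square is $0$; both are immediate, and the latter is exactly the observation already used to set up Theorem~\ref{2} and Corollary~\ref{3}.
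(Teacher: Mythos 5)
Your proposal is correct and follows exactly the route the paper intends: the decomposition comes from the Structure theorem for Artin rings (every finite commutative ring being Artin), and the bounds are then read off from Corollary~\ref{3}; the paper states this corollary without further proof precisely because it is this immediate specialization. Your added remarks that each factor is finite and that each $s_i$ exists since $0^2=0$ are the right (and only) details to check.
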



\section{Consequences}

\subsection{Some results of Beck's in \cite{Beck} are  consequences of our results.}

The following results were proved by Beck (in \cite{Beck}, Proposition 2.3, Theorem 5.5, Theorem 5.6 and Theorem 5.7). These results are consequence of our results. 

\begin{cor}[\cite{Beck}, Proposition 2.3]\label{5}
Let $p_1,\ldots, p_k, q_1,\ldots, q_r$ be different prime numbers
and $N=p_1^{2n_1}\ldots p_k^{2n_k}q_1^{2m_1+1}\ldots q_r^{2m_r+1}$. Then $\chi(\mathcal{Z}_N) =\omega(\mathcal{Z}_N)=p_1^{n_1}\ldots p_k^{n_k}q_1^{m_1}\ldots q_r^{m_r}+r$.
\end{cor}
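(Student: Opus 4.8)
The plan is to realize $\mathcal{Z}_N$ as a product of local rings and then apply Theorem~\ref{1} together with Corollary~\ref{4}. First I would use the Chinese Remainder Theorem to write
\[
\mathcal{Z}_N \;\cong\; \mathcal{Z}_{p_1^{2n_1}}\times\cdots\times \mathcal{Z}_{p_k^{2n_k}}\times \mathcal{Z}_{q_1^{2m_1+1}}\times\cdots\times \mathcal{Z}_{q_r^{2m_r+1}},
\]
each factor being a finite local ring $\mathcal{Z}_{p^a}$ with maximal ideal $(p)$. The key computational input is therefore: for a prime power $p^a$, what are $\omega(\mathcal{Z}_{p^a})$, $|\mathcal{B}|$, $|\mathcal{C}|$, and $\chi(\mathcal{Z}_{p^a})$? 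The maximum clique in $\mathcal{Z}_{p^a}$ is well known to be $\{0\}\cup\{\,jp^{\lceil a/2\rceil} : 1\le j\le p^{\lfloor a/2\rfloor}-1\,\}$ — the set of multiples of $p^{\lceil a/2\rceil}$ — which has size $p^{\lfloor a/2\rfloor}$ and consists entirely of elements $x$ with $x^2=0$ when $a$ is even (since then $2\lceil a/2\rceil = a$), but when $a=2m+1$ is odd the square of a multiple of $p^{m+1}$ is a multiple of $p^{2m+2}\equiv 0$, so actually every element of this clique still squares to zero. Hence in all cases $\omega(\mathcal{Z}_{p^a})=p^{\lfloor a/2\rfloor}$ and the canonical maximum clique lies entirely in $\mathcal{B}$, giving $|\mathcal{C}|=0$ and $|\mathcal{B}|=\omega(\mathcal{Z}_{p^a})$ for the distinguished clique $\mathcal{A}$.

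With these values in hand, the clique-number half is a direct substitution into Theorem~\ref{1}. For each even-exponent factor $\mathcal{Z}_{p_i^{2n_i}}$ we get $\omega=p_i^{n_i}$, $|\mathcal{C}_i|=0$, $|\mathcal{B}_i|=p_i^{n_i}$; for each odd-exponent factor $\mathcal{Z}_{q_j^{2m_j+1}}$ we get $\omega=q_j^{m_j}$, $|\mathcal{C}_j|=0$, $|\mathcal{B}_j|=q_j^{m_j}$. Then
\[
\prod_i(\omega(R_i)-|\mathcal{C}_i|) = \prod_{i=1}^k p_i^{n_i}\prod_{j=1}^r q_j^{m_j},
\qquad
\sum_i(\omega(R_i)-|\mathcal{B}_i|) = 0 + \cdots + 0 = r\cdot ?,
\]
so I must be more careful: $\omega(R_i)-|\mathcal{B}_i|$ is $0$ exactly when the distinguished clique fills up with square-zero elements, which — rechecking — happens for the even factors but one should double-check whether for odd $q_j^{2m_j+1}$ the quantity $|\mathcal{B}_j|$ might actually be one less than $\omega$. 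The point where the ``$+r$'' in the formula comes from is precisely this discrepancy $\omega(\mathcal{Z}_{q_j^{2m_j+1}})-|\mathcal{B}_j|=1$ for the odd-exponent primes, contributing $1$ each to the sum, while even-exponent primes contribute $0$. So the anticipated main obstacle is the careful bookkeeping of $\mathcal{B}_j$ versus $\mathcal{C}_j$ for the odd prime powers: I expect that for $\mathcal{Z}_{q^{2m+1}}$ the unique largest clique actually has a ``tail'' so that $|\mathcal{B}_j|=\omega-1$ and $|\mathcal{C}_j|=0$, which is what forces the $+r$ term, whereas for $\mathcal{Z}_{p^{2n}}$ one has $|\mathcal{B}_i|=\omega$, $|\mathcal{C}_i|=0$. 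Resolving this discrepancy correctly is the crux; everything else is arithmetic.

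For the chromatic half I would invoke Corollary~\ref{3} (the iterated version of Theorem~\ref{2}) to squeeze $\chi$ between $\sum\chi(R_i)-(n-1)$ and $\sum(\chi(R_i)-s_i)+\prod s_i$, and then observe that for each prime-power factor $\chi(\mathcal{Z}_{p^a})=\omega(\mathcal{Z}_{p^a})$ — this is already known for these local rings (indeed Beck proved chromatic$=$clique for $\mathcal{Z}_{p^a}$). The remaining task is to check that the lower and upper bounds coincide for this particular product, i.e.\ that $\sum\chi(R_i)-(n-1)$ equals the clique number computed above; since each even factor has $s_i=$ (its clique number, all square-zero) and each odd factor has $s_j=1$, the $\prod s_i$ telescopes appropriately and the two bounds pinch together at the value $\prod_i p_i^{n_i}\prod_j q_j^{m_j}+r$. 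Thus $\chi(\mathcal{Z}_N)=\omega(\mathcal{Z}_N)=p_1^{n_1}\cdots p_k^{n_k}q_1^{m_1}\cdots q_r^{m_r}+r$, as claimed. The only genuinely delicate step, as noted, is pinning down $s_i$ and $|\mathcal{B}_i|,|\mathcal{C}_i|$ for the odd-exponent factors; once that is done both equalities follow by substitution.
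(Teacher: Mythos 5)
Your overall strategy (CRT decomposition into $\mathcal{Z}_{p^a}$ factors, then Theorem~\ref{1} for $\omega$ and Corollary~\ref{3} for $\chi$) is exactly the paper's, but the key local computations — which you yourself identify as the crux — come out wrong, and the attempted patch does not repair them. For an odd prime power $q^{2m+1}$, the clique number is \emph{not} $q^{\lfloor a/2\rfloor}=q^m$: the set of multiples of $q^{m+1}$ (size $q^m$, all squaring to zero) can be enlarged by exactly one element of $q$-adic valuation exactly $m$, e.g.\ $q^m$ itself, since $q^m\cdot jq^{m+1}\equiv 0 \pmod{q^{2m+1}}$, while no two such elements annihilate each other. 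Hence $\omega(\mathcal{Z}_{q^{2m+1}})=q^m+1$, and the added element satisfies $(q^m)^2=q^{2m}\neq 0$, so the correct bookkeeping is $|\mathcal{B}_j|=q^m$ and $|\mathcal{C}_j|=1$ (your proposed fix ``$|\mathcal{B}_j|=\omega-1$ and $|\mathcal{C}_j|=0$'' is internally inconsistent, since $|\mathcal{B}_j|+|\mathcal{C}_j|=|\mathcal{A}_j|=\omega$). With the correct values, Theorem~\ref{1} gives $\prod_i(\omega-|\mathcal{C}_i|)=\prod p_i^{n_i}\prod q_j^{m_j}$ and $\sum_i(\omega-|\mathcal{B}_i|)=0\cdot k+1\cdot r=r$, which is where the $+r$ actually comes from; your stated values ($\omega=q^m$, $|\mathcal{C}_j|=0$) cannot produce it.

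The chromatic half has two further errors. First, $s_j=1$ for the odd factors is wrong: the $q^m$ multiples of $q^{m+1}$ all square to zero and form a clique, so they occupy $q^m$ distinct color classes, giving $s_j=q_j^{m_j}$ (and $s_i=p_i^{n_i}$ for the even factors). Second, the lower bound $\sum\chi(R_i)-(n-1)$ of Corollary~\ref{3} does \emph{not} meet the upper bound: already for $N=36$ it gives $\chi(\mathcal{Z}_4)+\chi(\mathcal{Z}_9)-1=4$, whereas $\chi(\mathcal{Z}_{36})=6$. The correct pinch is between $\omega(\mathcal{Z}_N)\leq\chi(\mathcal{Z}_N)$ and the Corollary~\ref{3} upper bound $\sum_i(\chi(R_i)-s_i)+\prod_i s_i$, which with the correct data equals $0\cdot k+1\cdot r+\prod p_i^{n_i}\prod q_j^{m_j}=\omega(\mathcal{Z}_N)$. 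So the architecture of your argument is right, but both equalities fail as written until the values of $\omega$, $|\mathcal{B}_j|$, $|\mathcal{C}_j|$, and $s_j$ for the odd-exponent factors are corrected and the chromatic bound is closed against $\omega$ rather than against the additive lower bound.
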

\begin{proof}
We note that for $1\leq i\leq k$, $\mathcal{A}_i=\mathcal{B}_i \cup \mathcal{C}_i$ is a maximum clique of $\mathcal{Z}_{p_i^{2n_i}}$ and for $k+1\leq j\leq k+r$, $\mathcal{A}_j=\mathcal{B}_j \cup \mathcal{C}_j$ is a maximum clique of $\mathcal{Z}_{q_j^{2m_j+1}}$ (See in the  Section 2). It is easy to see that for $1\leq i\leq k$, $s_i=|\mathcal{B}_i|=p_i^{n_i}=\chi(\mathcal{Z}_{p_i^{2n_i}})=\omega(\mathcal{Z}_{p_i^{2n_i}})$, $|\mathcal{C}_i|=0$, 
 and for $k+1\leq j\leq k+r$, $s_j=|\mathcal{B}_j|=q_j^{m_j}=\chi(\mathcal{Z}_{q_j^{2m_j+1}})-1=\omega(\mathcal{Z}_{q_j^{2m_j+1}})-1$, $|\mathcal{C}_j|=1$. By Theorem \ref{1} and Corollary \ref{3}, we have $\omega(\mathcal{Z}_N)=p_1^{n_1}\ldots p_k^{n_k}q_1^{m_1}\ldots q_r^{m_r}+r=\chi(\mathcal{Z}_N)$. 
\end{proof}
\noindent Using Theorem \ref{2} and Corollary \ref{3}, we have the following consequence which were proved by Beck in \cite{Beck}.
\begin{cor}[\cite{Beck}, Theorem 5.5]\label{5.1}
A finite product of coloring is a coloring.
\end{cor}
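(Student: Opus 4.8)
The statement to prove, Corollary \ref{5.1}, asserts that a finite product of coloring rings is itself a coloring ring; equivalently, if each $R_i$ satisfies $\chi(R_i) < \infty$, then $\chi(R_1 \times R_2 \times \cdots \times R_n) < \infty$.

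\begin{proof}
The plan is to invoke Corollary \ref{3} directly. Suppose $R_1, \ldots, R_n$ are coloring rings, so that $k_i := \chi(R_i) < \infty$ for each $i$, $1 \leq i \leq n$. For each $i$, fix a proper coloring $U^i_1, U^i_2, \ldots, U^i_{k_i}$ of $R_i$ with exactly $k_i$ color classes. Since $0 \in R_i$ and $0^2 = 0$, there is at least one color class of $R_i$ containing an element whose square is $0$, so the largest integer $s_i$ with $1 \leq s_i \leq k_i$ such that each of $s_i$ distinct color classes contains some element squaring to $0$ is well-defined and satisfies $1 \leq s_i \leq k_i$. Corollary \ref{3} then gives
\[
\chi(R_1 \times R_2 \times \cdots \times R_n) \leq \sum_{i=1}^n \big(\chi(R_i) - s_i\big) + s_1 s_2 \cdots s_n.
\]
The right-hand side is a finite sum and product of finite natural numbers, hence finite. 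Therefore $\chi(R_1 \times R_2 \times \cdots \times R_n) < \infty$, i.e., the product is a coloring ring.

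Since there is essentially nothing more to do than quote the upper bound of Corollary \ref{3} and observe that its right-hand side is a finite quantity whenever every $\chi(R_i)$ is finite, there is no genuine obstacle here; the only point requiring a word of care is the well-definedness of each $s_i$, which is immediate from the presence of the zero element in every color class decomposition. One could alternatively avoid Corollary \ref{3} entirely and argue for $n = 2$ from Theorem \ref{2}, whose upper bound $s_1 s_2 + (k_1 - s_1) + (k_2 - s_2)$ is again manifestly finite, and then induct on $n$ by writing $R_1 \times \cdots \times R_n \cong (R_1 \times \cdots \times R_{n-1}) \times R_n$ and applying the two-factor case together with the inductive hypothesis that $R_1 \times \cdots \times R_{n-1}$ is a coloring ring; but this is exactly the induction already packaged into Corollary \ref{3}, so appealing to that corollary is the cleanest route.
\end{proof}
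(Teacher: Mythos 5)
Your proof is correct and takes essentially the same route as the paper, which states this corollary as an immediate consequence of Theorem \ref{2} and Corollary \ref{3} without writing out the details: the upper bound in Corollary \ref{3} is a finite expression in the finite quantities $\chi(R_i)$ and $s_i$, so the product has finite chromatic number. Your additional remarks on the well-definedness of $s_i$ and the alternative induction via Theorem \ref{2} are accurate but add nothing beyond what the paper's citation already packages.
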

\noindent Using Corollary \ref{5.1}, Beck proved the following results. 
\begin{cor}[\cite{Beck}, Theorem 5.6]
Let $I$ be a finitely generated ideal in a coloing. Then $R/Ann \ I$ is a coloring.
\end{cor}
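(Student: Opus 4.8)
The plan is to reduce the statement about $R/\mathrm{Ann}\,I$ to a statement about a finite product of copies of $R$ (or of quotients of $R$), so that Corollary~\ref{5.1} applies. Write $I = (a_1,\ldots,a_m)$ for the finitely many generators. The key elementary observation is that $\mathrm{Ann}\,I = \bigcap_{t=1}^m \mathrm{Ann}\,(a_t)$, since an element kills all of $I$ if and only if it kills each generator. Consequently there is a natural injective ring homomorphism
\[
R/\mathrm{Ann}\,I \;\longrightarrow\; \prod_{t=1}^m R/\mathrm{Ann}\,(a_t),
\qquad
r + \mathrm{Ann}\,I \;\longmapsto\; \bigl(r+\mathrm{Ann}\,(a_1),\,\ldots,\,r+\mathrm{Ann}\,(a_m)\bigr),
\]
whose kernel is $\bigl(\bigcap_t \mathrm{Ann}\,(a_t)\bigr)/\mathrm{Ann}\,I = 0$. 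So $R/\mathrm{Ann}\,I$ embeds as a subring of a finite product of the rings $R/\mathrm{Ann}\,(a_t)$.

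Next I would handle a single principal annihilator: the multiplication-by-$a_t$ map $R \to R$, $r \mapsto a_t r$, has kernel exactly $\mathrm{Ann}\,(a_t)$, so it induces an injective additive (indeed $R$-module) map $R/\mathrm{Ann}\,(a_t) \hookrightarrow R$, $r + \mathrm{Ann}\,(a_t) \mapsto a_t r$. This map is an isomorphism of zero-divisor graphs onto its image: $(r+\mathrm{Ann}\,(a_t))(s+\mathrm{Ann}\,(a_t)) = 0$ in $R/\mathrm{Ann}\,(a_t)$ means $a_t r s \in \mathrm{Ann}\,(a_t)$, i.e.\ $a_t^2 rs = 0$, i.e.\ $(a_t r)(a_t s) = 0$ in $R$ — so adjacency is preserved and reflected. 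Hence the graph of $R/\mathrm{Ann}\,(a_t)$ is isomorphic to an induced subgraph of the graph of $R$, and therefore $\chi\bigl(R/\mathrm{Ann}\,(a_t)\bigr) \le \chi(R) < \infty$ for each $t$, since $R$ is a coloring.

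Now I would assemble the pieces. Each factor $R/\mathrm{Ann}\,(a_t)$ is a coloring, so by Corollary~\ref{5.1} (a finite product of colorings is a coloring) the product $\prod_{t=1}^m R/\mathrm{Ann}\,(a_t)$ is a coloring. Finally, $R/\mathrm{Ann}\,I$ embeds as a subring of this product, and its zero-divisor graph is the induced subgraph on the image (again because a ring embedding preserves and reflects the relation $xy=0$); an induced subgraph of a graph with finite chromatic number has finite chromatic number, so $\chi\bigl(R/\mathrm{Ann}\,I\bigr) \le \chi\!\left(\prod_{t=1}^m R/\mathrm{Ann}\,(a_t)\right) < \infty$, i.e.\ $R/\mathrm{Ann}\,I$ is a coloring. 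The one point deserving care — and the only real obstacle — is verifying that each of the two maps above (the diagonal embedding into the product, and the multiplication-by-$a_t$ identification) genuinely induces a \emph{graph} embedding, i.e.\ that $xy=0$ in the source is equivalent to the images being adjacent; once that is checked, everything else is immediate from Corollary~\ref{5.1} and the monotonicity of $\chi$ under passing to subgraphs. Note that finite generation of $I$ is used precisely to keep the product finite so that Corollary~\ref{5.1} is applicable.
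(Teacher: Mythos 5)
Your argument is correct and is essentially the route the paper (following Beck) intends: embed $R/\mathrm{Ann}\,I$ diagonally into the finite product $\prod_{t=1}^{m} R/\mathrm{Ann}\,(a_t)$ using $\mathrm{Ann}\,I=\bigcap_t \mathrm{Ann}\,(a_t)$, show each factor has finite chromatic number by comparison with $R$, and invoke Corollary~\ref{5.1}; the paper supplies no details beyond citing that corollary, so you have in fact filled in the whole proof. One inaccuracy worth fixing: $(r+\mathrm{Ann}\,(a_t))(s+\mathrm{Ann}\,(a_t))=0$ in $R/\mathrm{Ann}\,(a_t)$ means $rs\in\mathrm{Ann}\,(a_t)$, i.e.\ $a_trs=0$, which implies but is \emph{not} implied by $(a_tr)(a_ts)=a_t^2rs=0$ (take $R=\mathcal{Z}_8$, $a_t=2$, $r=1$, $s=2$: the images $2$ and $4$ are adjacent while $\bar r\bar s=\bar 2\neq 0$), so the multiplication-by-$a_t$ map is an injective edge-preserving map rather than an isomorphism onto an induced subgraph; since only edge-preservation and injectivity are needed to pull back a proper coloring of $R$ and conclude $\chi\bigl(R/\mathrm{Ann}\,(a_t)\bigr)\le\chi(R)$, your proof still goes through after deleting the words ``and reflected.''
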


\begin{cor}[\cite{Beck}, Corollary 5.7]
Let $R$ be a Noetherian ring whose nilradical is finite. Then $rad(Ann I)/Ann I$ is finite for any ideal.
\end{cor}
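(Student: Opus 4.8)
The plan is to deduce the statement from the previous corollary ([\cite{Beck}, Theorem~5.6]) together with one extra ingredient, which carries the real weight: \emph{every Noetherian coloring has finite nilradical}. Granting this, the corollary is quick. Since $R$ is Noetherian, $I$ is finitely generated; and $R$ is a coloring (by the paper's standing assumption — or, tied directly to the stated hypotheses, because a Noetherian ring $R$ with finite nilradical $N$ has $\chi(R)\le\chi(R/N)\,|N|<\infty$, the reduced Noetherian ring $R/N$ being a coloring as it has only finitely many minimal primes, colour $x$ by $\{j:x\notin\mathfrak p_j\}$). Hence [\cite{Beck}, Theorem~5.6] shows $S:=R/Ann\,I$ is a coloring, and it is Noetherian, being a quotient of $R$. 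Its nilradical is exactly $rad(Ann\,I)/Ann\,I$, so the auxiliary fact applied to $S$ gives the conclusion.

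For the auxiliary fact, here is the route I would take. Let $S$ be a Noetherian coloring with nilradical $N$; since $S$ is Noetherian, $N$ is nilpotent, say $N^m=\{0\}$ with $m$ minimal. Assume toward a contradiction that $N$ is infinite, and let $i$ be the largest index with $N^i$ infinite (it exists: $N^1=N$ is infinite and $N^m=\{0\}$ is finite, so $1\le i\le m-1$). If $2i\ge m$, then for distinct $x,y\in N^i$ we get $xy\in N^{2i}\subseteq N^m=\{0\}$, so $N^i$ is itself an infinite clique of $S$, contradicting $\omega(S)\le\chi(S)<\infty$. So $2i<m$; then $N^{i+1}$ is finite (by maximality of $i$), and since $2i\ge i+1$ we have $N^{2i}\subseteq N^{i+1}$ finite, so the product of any two elements of $N^i$ takes only finitely many values. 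Colour the $2$-element subsets of the infinite set $N^i$ by the value of the corresponding product and apply the infinite Ramsey theorem: this yields an infinite $X\subseteq N^i$ and an element $c$ with $xy=c$ for all distinct $x,y\in X$. The key identity is then $(x_1-x_2)(x_3-x_4)=x_1x_3-x_1x_4-x_2x_3+x_2x_4=c-c-c+c=0$ for any four distinct $x_1,x_2,x_3,x_4\in X$. Finally, a short combinatorial step extracts from $X$ infinitely many pairwise disjoint pairs $\{a_k,b_k\}$ with pairwise distinct (and nonzero) differences $d_k:=a_k-b_k$: given the pairs already chosen, the rest of $X$ is still infinite, and fixing one of its elements forces the differences to the other elements to assume infinitely many values, so a new pair with a fresh difference can always be picked greedily. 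Since for $k\ne\ell$ the four indices involved in $d_k,d_\ell$ are distinct, the identity gives $d_kd_\ell=0$; thus $\{d_k:k\ge1\}$ is an infinite clique of $S$, the desired contradiction.

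The main obstacle is this auxiliary fact, and inside it the passage from ``$N^{i+1}$ finite'' to ``$N^i$ finite'' in the range $2i<m$, where $N^i$ need not be a clique and nothing can be read off directly; the Ramsey-plus-difference device above is how I would circumvent it, the delicate point being the selection of infinitely many disjoint pairs with pairwise distinct differences, which is precisely what makes the identity $(x_1-x_2)(x_3-x_4)=0$ usable. Everything preceding the claim — finite generation of $I$, the invocation of [\cite{Beck}, Theorem~5.6], and recognising $rad(Ann\,I)/Ann\,I$ as the nilradical of $R/Ann\,I$ — is routine.
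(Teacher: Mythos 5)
Your argument is correct, but note that the paper itself supplies no proof of this statement: it is quoted verbatim from Beck (his Corollary 5.7) and merely flagged as something Beck derived from his Theorems 5.5--5.6, which the paper rederives. So the comparison is really with Beck's intended route. Your reduction coincides with it at the top level: $R$ Noetherian with finite nilradical is a coloring, $I$ is finitely generated, hence $R/\mathrm{Ann}\,I$ is a Noetherian coloring by the preceding corollary, and its nilradical is exactly $\mathrm{rad}(\mathrm{Ann}\,I)/\mathrm{Ann}\,I$. The genuinely nontrivial ingredient --- every (Noetherian) coloring has finite nilradical --- is Beck's Theorem 3.9, which both the paper and Beck's Corollary 5.7 take as given; you instead prove it from scratch, and your proof checks out. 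The descending-chain argument on the powers $N^i$ is sound ($N$ nilpotent since $S$ is Noetherian; the set of $i$ with $N^i$ infinite is an initial segment); in the case $2i\ge m$ the set $N^i$ is itself an infinite clique; in the case $2i<m$ the products land in the finite set $N^{2i}\subseteq N^{i+1}$, the infinite Ramsey theorem yields a constant-product set $X$, the identity $(x_1-x_2)(x_3-x_4)=0$ holds for four distinct elements of $X$, and the greedy extraction of disjoint pairs with pairwise distinct nonzero differences works because only finitely many difference values are forbidden at each step while infinitely many are available. The resulting $\{d_k\}$ is an infinite clique (only \emph{distinct} pairs need product zero under the paper's graph-theoretic definition), giving the contradiction. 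Your subsidiary claims --- $\chi(R)\le\chi(R/N)\,|N|$ via the product coloring, and $\chi(R/N)\le 2^r$ for a reduced Noetherian ring via $x\mapsto\{j:x\notin\mathfrak p_j\}$ --- are also correct. In short: same skeleton as the source, plus a self-contained Ramsey-theoretic proof of the one lemma everyone else cites; the trade-off is length versus independence from Beck's Theorem 3.9.
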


\subsection{Some main results of Anderson and Naseer in \cite{And} are  consequences of our results.}
Anderson and Naseer (in \cite{And}, Theorem 3.2 and its corollary 3.3) showed the following  results.
\begin{cor}[\cite{And}, Theorem 3.2]\label{7}
Let $R_1,\ldots R_k,R_{k+1},\ldots, R_{k+r}$ be colorings. Put $R=R_1\times \ldots R_k \times R_{k+1}\times \ldots \times R_{k+r}$. Let $J_i$ be the nilradical of $R_i$. 

\noindent (i) Suppose that the index of nilpotency for $J_i$ is $2n_i$ for $1\leq i\leq k$ and $2m_i-1$ for $k+1\leq i\leq k+r$ ($n_i, \ m_i \geq 1)$. \\
Then $\chi(R)\geq\omega(R)\geq |J_1^{n_1}|\ldots |J_k^{n_k}||J_{k+1}^{m_{k+1}}|\ldots |J_{k+r}^{m_{k+r}}|+r$.

\noindent (ii) Furthermore, suppose for each $i$, the following condition holds: For $1\leq i\leq k, x,y\in R_i$ with $xy=0$ implies $x\in J_i^{n_i}$ or $y\in J_i^{n_i}$ and if $x\notin J_i^{n_i+1}$, then $y\in J_i^{n_i}$. For $k+1\leq i\leq k+r, x,y\in R_i$ with $xy=0$ implies $x\in J_i^{m_i}$ or $y\in J_i^{m_i}$.\\
Then $\chi(R)=\omega(R)= |J_1^{n_1}|\ldots |J_k^{n_k}||J_{k+1}^{m_{k+1}}|\ldots |J_{k+r}^{m_{k+r}}|+r$
\end{cor}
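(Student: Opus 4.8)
The plan is to derive Corollary~\ref{7} directly from Theorem~\ref{1} and Corollary~\ref{3} by identifying, for each factor ring $R_i$, a concrete maximum clique whose "square-zero part" $\mathcal{B}_i$ is exactly $J_i^{n_i}$ (for $1\le i\le k$) or $J_i^{m_i}$ (for $k+1\le i\le k+r$), and then reading off the clique number and the colouring bounds.

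First I would treat a single factor $R_i$ with index of nilpotency $2n_i$ for $J_i$. The set $J_i^{n_i}$ is an ideal satisfying $(J_i^{n_i})^2\subseteq J_i^{2n_i}=\{0\}$, so every pair of elements of $J_i^{n_i}$ multiplies to $0$ and in particular each element squares to zero; hence $J_i^{n_i}$ is a clique contained in the "$\mathcal{B}_i$-type" elements, giving $\omega(R_i)\ge |J_i^{n_i}|$ and $|\mathcal{B}_i|\ge |J_i^{n_i}|$. For the factors $R_{k+j}$ with odd index $2m_{k+j}-1$, the ideal $J_{k+j}^{m_{k+j}}$ satisfies $(J_{k+j}^{m_{k+j}})^2\subseteq J_{k+j}^{2m_{k+j}}=\{0\}$ as well; moreover one picks a nonzero $c\in J_{k+j}^{m_{k+j}-1}$ (which exists since the index is exactly $2m_{k+j}-1$, so $J_{k+j}^{m_{k+j}-1}\ne\{0\}$) with $c^2\in J_{k+j}^{2m_{k+j}-2}$ possibly nonzero; then $J_{k+j}^{m_{k+j}}\cup\{c\}$ is a clique with $\mathcal{B}_{k+j}\supseteq J_{k+j}^{m_{k+j}}$ and $\mathcal{C}_{k+j}\ni c$, so $\omega(R_{k+j})\ge |J_{k+j}^{m_{k+j}}|+1$ and $|\mathcal{C}_{k+j}|\ge 1$. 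Plugging these into the lower bound $\omega(\textbf{R})\ge \prod_i(\omega(R_i)-|\mathcal{C}_i|)+\sum_i(\omega(R_i)-|\mathcal{B}_i|)$ from Theorem~\ref{1}, together with $\omega(\textbf{R})\le\chi(\textbf{R})$, yields part (i): the product term contributes $\prod_{i=1}^{k}|J_i^{n_i}|\cdot\prod_{j=1}^{r}|J_{k+j}^{m_{k+j}}|$ and the sum term contributes at least $r$ (one from each of the $r$ odd-index factors via the element $c$).

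For part (ii), the extra hypothesis is designed to force $\mathcal{A}_i$ to be exactly the clique just described, so that the inequalities above become equalities. For $1\le i\le k$: the condition "$xy=0\Rightarrow x\in J_i^{n_i}$ or $y\in J_i^{n_i}$, and $x\notin J_i^{n_i+1}\Rightarrow y\in J_i^{n_i}$" means that any maximum clique $A_i$ can contain at most one element outside $J_i^{n_i}$, and if it does contain such an element then all the others lie in $J_i^{n_i}$ — I would show that in fact the cliquemaximal choice is $A_i=J_i^{n_i}$ with $|\mathcal{C}_i|=0$, giving $\omega(R_i)=|J_i^{n_i}|$ and $|\mathcal{B}_i|=|J_i^{n_i}|$, $|\mathcal{C}_i|=0$. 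For $k+1\le i\le k+r$: "$xy=0\Rightarrow x\in J_i^{m_i}$ or $y\in J_i^{m_i}$" forces every clique to be contained in $J_i^{m_i}\cup\{t\}$ for at most one $t\notin J_i^{m_i}$; a short argument shows the maximum is achieved with exactly one such $t$, and $t$ may be taken with $t^2\ne0$, so $\omega(R_i)=|J_i^{m_i}|+1$, $|\mathcal{B}_i|=|J_i^{m_i}|$, $|\mathcal{C}_i|=1$. Substituting these exact values into Theorem~\ref{1} gives $\omega(\textbf{R})=\prod_{i=1}^k|J_i^{n_i}|\prod_{j=1}^r|J_{k+j}^{m_{k+j}}|+r$, and Corollary~\ref{3} with $s_i=|\mathcal{B}_i|$ (note that under the hypotheses the colour classes can be arranged so that precisely $|\mathcal{B}_i|$ of them contain a square-zero element — one needs $\chi(R_i)=\omega(R_i)$ for each factor, which follows because the forced clique structure also gives a matching colouring) forces $\chi(\textbf{R})$ between this same value and itself, hence equality.

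The main obstacle I expect is the bookkeeping in part (ii) needed to pin down $\mathcal{B}_i$ and $\mathcal{C}_i$ exactly, rather than merely bounding them: one must argue that the hypotheses not only bound a maximum clique from above by $|J_i^{n_i}|$ (resp.\ $|J_i^{m_i}|+1$) but also that the \emph{particular} maximum clique $\mathcal{A}_i$ selected by the recipe preceding Theorem~\ref{1} (the one maximising $|\mathcal{B}_i|$) realises these values, and likewise that each factor is itself a chromatic ring with $s_i=\omega(R_i)$ or $\omega(R_i)-1$ so that Corollary~\ref{3} closes the gap between the upper and lower bounds on $\chi(\textbf{R})$. Establishing $\chi(R_i)=\omega(R_i)$ for the individual factors under the stated conditions — essentially exhibiting an explicit proper colouring with the right number of square-zero classes — is the step that requires the most care; everything else is a direct substitution into the two results already proved.
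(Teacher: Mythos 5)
Your proposal follows essentially the same route as the paper: identify $|\mathcal{B}_i|=|J_i^{n_i}|$ (resp.\ $|J_j^{m_j}|$) with $|\mathcal{C}_i|=0$ (resp.\ $|\mathcal{C}_j|=1$) and $s_i=|\mathcal{B}_i|$, then substitute into Theorem~\ref{1} and Corollary~\ref{3}. In fact the paper's entire justification is a single sentence asserting exactly these identifications, so the steps you flag as needing care in part (ii) --- pinning down $\omega(R_i)$ exactly rather than from below, and producing the proper colouring of each factor with the right number of square-zero colour classes --- are left unproved by the paper as well; your sketch is, if anything, more detailed than the source.
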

Using the Corollary \ref{7}, they proved the following results.

\begin{cor}[\cite{And}, Corollary 3.3]\label{8}
Let $A$ be a regular Noetherian ring. Let $N_1,\ldots,N_k,N_{k+1},\ldots, N_{k+r}$ be maximul ideals of $A$ with each residue field $A/N_i$ finite. Consider the ring\\  $R=A/N_1^{2n_1}\ldots N_k^{2n_k} N_{k+1}^{2m_{k+1}-1}\ldots N_{k+r}^{2m_{k+r}-1}$. Then $R$ is a chromatic ring with $\chi(R)=\omega(R)=|A /N_1|^{\alpha_1} \ldots |A/N_k|^{\alpha_k} |A/N_{k+1}|^{\beta_{k+1}} \ldots |A/N_{k+r}|^{\beta_{k+r}}+r$, where $\alpha_i=\sum\limits_{\ell=n_i}^{2n_{i}-1}\binom{\ell-1+ht\ N_i}{ht\ N_i-1}$ and $\beta_i=\sum\limits_{\ell=m_i+1}^{2m_i-2}\binom{\ell-1+ht\ N_i}{ht\ N_i-1}$.
\end{cor}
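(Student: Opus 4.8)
The plan is to deduce this from Corollary \ref{7} (the Anderson--Naseer Theorem 3.2) by writing $R$ as a finite product of quotients of regular local rings and checking the hypotheses of part (ii) for each factor. Since $N_1,\dots,N_{k+r}$ are distinct maximal ideals, the powers $N_i^{a_i}$, where $a_i=2n_i$ for $1\le i\le k$ and $a_i=2m_i-1$ for $k+1\le i\le k+r$, are pairwise comaximal, so the Chinese Remainder Theorem gives $R\cong R_1\times\cdots\times R_{k+r}$ with $R_i=A/N_i^{a_i}$. Each $R_i$ is local with maximal ideal $\mathfrak m_i=N_i/N_i^{a_i}$, and in fact $R_i\cong A_{N_i}/\mathfrak n_i^{a_i}$, where $(A_{N_i},\mathfrak n_i)$ is a regular local ring of dimension $\operatorname{ht}N_i$ (here I use that $A$ is regular Noetherian). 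Because $A/N_i$ is finite and $\dim_{A/N_i}(\mathfrak n_i^{\ell}/\mathfrak n_i^{\ell+1})=\binom{\ell+\operatorname{ht}N_i-1}{\operatorname{ht}N_i-1}$ for all $\ell\ge 0$ (the associated graded ring of a regular local ring is a polynomial ring over the residue field), each $R_i$ is finite, so $R$ is a finite ring and in particular a coloring. The nilradical of $R_i$ is $J_i=\mathfrak m_i$, and its index of nilpotency is exactly $a_i$, again by the polynomial-ring structure of $\operatorname{gr}_{\mathfrak n_i}(A_{N_i})$ (or Nakayama's lemma applied to $\mathfrak n_i^{a_i-1}=\mathfrak n_i^{a_i}$). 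This is precisely the hypothesis of Corollary \ref{7}(i): the index is $2n_i$ for the first $k$ factors and $2m_i-1$ for the last $r$.

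Next I would verify hypothesis (ii) of Corollary \ref{7} for each $R_i$. The key structural input is that $\operatorname{gr}_{\mathfrak n_i}(A_{N_i})\cong (A/N_i)[x_1,\dots,x_{\operatorname{ht}N_i}]$ is an integral domain, so the $\mathfrak n_i$-adic order $v$ is multiplicative: $v(xy)=v(x)+v(y)$ for nonzero elements. Hence for $x,y\in R_i=A_{N_i}/\mathfrak n_i^{a_i}$ one has $xy=0$ if and only if $v(x)+v(y)\ge a_i$. When $a_i=2n_i$: if $xy=0$ then $v(x)\le n_i-1$ and $v(y)\le n_i-1$ cannot both hold, so $x\in J_i^{n_i}$ or $y\in J_i^{n_i}$; and if in addition $x\notin J_i^{n_i+1}$, i.e.\ $v(x)\le n_i$, then $v(y)\ge 2n_i-v(x)\ge n_i$, so $y\in J_i^{n_i}$. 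When $a_i=2m_i-1$: the same bound gives $xy=0\Rightarrow x\in J_i^{m_i}$ or $y\in J_i^{m_i}$. Thus the hypotheses of Corollary \ref{7}(ii) hold, and consequently $\chi(R)=\omega(R)=|J_1^{n_1}|\cdots|J_k^{n_k}|\,|J_{k+1}^{m_{k+1}}|\cdots|J_{k+r}^{m_{k+r}}|+r$.

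It then remains to evaluate these orders. For a local factor $R_i$ with $J_i=\mathfrak n_i/\mathfrak n_i^{a_i}$ and any $t$ with $1\le t\le a_i$, one has $|J_i^{t}|=\prod_{\ell=t}^{a_i-1}|\mathfrak n_i^{\ell}/\mathfrak n_i^{\ell+1}|=|A/N_i|^{\sum_{\ell=t}^{a_i-1}\binom{\ell+\operatorname{ht}N_i-1}{\operatorname{ht}N_i-1}}$. Taking $t=n_i$ and $a_i=2n_i$ for $1\le i\le k$ produces exactly the exponent $\alpha_i$; taking $t=m_i$ and $a_i=2m_i-1$ for $k+1\le i\le k+r$ produces, after a routine reindexing of the binomial sum, the exponent $\beta_i$ of the statement. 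Substituting these into the displayed equality from the previous paragraph yields $\chi(R)=\omega(R)=|A/N_1|^{\alpha_1}\cdots|A/N_k|^{\alpha_k}\,|A/N_{k+1}|^{\beta_{k+1}}\cdots|A/N_{k+r}|^{\beta_{k+r}}+r$, and in particular $R$ is a chromatic ring, as claimed.

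The main obstacle I anticipate is not the Chinese Remainder decomposition, which is routine, nor the arithmetic with binomial coefficients, but confirming that the factors $R_i$ really do satisfy the rather rigid hypothesis (ii) of Corollary \ref{7}. That step is exactly where regularity of $A$ is used: it forces $\operatorname{gr}_{\mathfrak n_i}(A_{N_i})$ to be a polynomial ring, which simultaneously supplies the multiplicativity of the $\mathfrak n_i$-adic order (needed to describe the zero-divisors of $A_{N_i}/\mathfrak n_i^{a_i}$) and the Hilbert-function count $\dim_{A/N_i}(\mathfrak n_i^{\ell}/\mathfrak n_i^{\ell+1})=\binom{\ell+\operatorname{ht}N_i-1}{\operatorname{ht}N_i-1}$ (needed to compute $|J_i^{t}|$). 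Once these two facts are in place, everything else is bookkeeping.
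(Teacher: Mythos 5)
Your route is the one the paper intends: decompose $R$ by the Chinese Remainder Theorem and feed the local factors $A/N_i^{a_i}\cong A_{N_i}/(N_iA_{N_i})^{a_i}$ into Corollary \ref{7}. The paper itself supplies essentially none of these details --- its entire justification for Corollaries \ref{7}--\ref{9} is the one-line identification $|\mathcal{B}_i|=|J_i^{n_i}|=s_i,\ |\mathcal{C}_i|=0$ for the even-index factors and $|\mathcal{B}_j|=|J_j^{m_j}|=s_j,\ |\mathcal{C}_j|=1$ for the odd ones, deferring the rest to Anderson and Naseer --- so your verification of hypothesis (ii) via the multiplicativity of the $N_iA_{N_i}$-adic order (the associated graded ring of a regular local ring being a polynomial ring, hence a domain) and the Hilbert-function count $\binom{\ell+ht\ N_i-1}{ht\ N_i-1}$ is exactly the missing content, and that part of your argument is correct.

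The one place you should not wave your hands is the final ``routine reindexing.'' Your computation gives $|J_i^{m_i}|=|A/N_i|^{\gamma_i}$ with $\gamma_i=\sum_{\ell=m_i}^{2m_i-2}\binom{\ell-1+ht\ N_i}{ht\ N_i-1}$, whereas the statement's $\beta_i$ runs from $\ell=m_i+1$; no change of index turns one into the other, since they differ by the positive term $\binom{m_i-1+ht\ N_i}{ht\ N_i-1}$. Your value is the right one: for $A=\mathcal{Z}$, $N=(p)$, $m=2$ (so $R=\mathcal{Z}_{p^3}$ and $ht\ N=1$) the stated $\beta$ is an empty sum and would give $\chi(R)=1+1=2$, contradicting $\chi(\mathcal{Z}_{p^3})=p+1$ from Corollary \ref{5}, while $\gamma=1$ gives the correct $p+1$. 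So the lower limit $m_i+1$ in the statement is an off-by-one misprint (the exponents $\alpha_i$, which you match exactly, are stated correctly); record your exponent and flag the discrepancy rather than asserting that the two sums agree.
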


\begin{cor}[\cite{And}, Corollary 3.5]\label{9}
Let $R$ be a coloring that is a finite direct product of reduced rings, principal ideal rings, and finite rings with index of nilpotency 2. Then $\chi(R)=\omega(R)$.
\end{cor}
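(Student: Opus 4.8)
The plan is to derive Corollary \ref{9} as a direct application of Theorem \ref{1} together with Corollary \ref{3} (or equivalently the chromatic-number bounds in Theorem \ref{2}), by showing that for each type of factor ring appearing in the statement, the upper bound on $\chi$ coming from Corollary \ref{3} coincides with the lower bound $\omega(\textbf{R})$ computed via Theorem \ref{1}. Write $R=R_1\times\cdots\times R_n$ where each $R_i$ is either reduced, or a principal ideal ring, or a finite ring with index of nilpotency $2$ (so $J_i^2=\{0\}$). Since $R$ is a coloring, each $R_i$ is a coloring as well, so all the relevant numbers $\chi(R_i),\omega(R_i),s_i,|\mathcal{B}_i|,|\mathcal{C}_i|$ are finite and well defined.

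The key step is a case analysis on the factor types, computing $s_i$, $|\mathcal{B}_i|$ and $|\mathcal{C}_i|$ in each case and checking that Theorem \ref{1} and Corollary \ref{3} give matching values. First, if $R_i$ is reduced then no nonzero $x$ has $x^2=0$, so $s_i=1$, $\mathcal{B}_i=\{0\}$ hence $|\mathcal{B}_i|=1$, and $|\mathcal{C}_i|=\omega(R_i)-1$; moreover, in a reduced ring $xy=0$ with $x\ne 0\ne y$ still forces $x^2\ne 0$, and one checks (as in the setup before Theorem \ref{1}) that a maximum clique of a reduced coloring has the shape ``$0$ together with pairwise-orthogonal nonzero elements'', which behaves exactly like the $r$-contribution in Corollary \ref{5}. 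Second, if $R_i$ is a principal ideal ring, I would invoke the structure of finite (Artin) principal ideal rings --- each is a product of chain rings $\mathcal{Z}_{p^k}$-type quotients --- so that, arguing as in Corollary \ref{5}, one gets $|\mathcal{C}_i|\in\{0,1\}$ with $s_i=|\mathcal{B}_i|=\omega(R_i)-|\mathcal{C}_i|$ and $|\mathcal{B}_i|=\chi(R_i)-|\mathcal{C}_i|$, i.e. $\chi(R_i)=\omega(R_i)=s_i+|\mathcal{C}_i|$. Third, if $R_i$ is finite with $J_i^2=\{0\}$, then $J_i$ itself is a clique (since the product of any two elements of $J_i$ lies in $J_i^2=\{0\}$) and in fact a maximum clique, with every nonzero element of $J_i$ squaring to $0$; so $\mathcal{C}_i=\emptyset$, $|\mathcal{B}_i|=\omega(R_i)=|J_i|$, and $s_i=|J_i|=\chi(R_i)$ (a clique of size $|J_i|$ forces $\chi\ge|J_i|$, and $|J_i|$ color classes suffice since the only nonzero adjacencies are ``multiplication into $J_i^2$''). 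In every case $s_i=|\mathcal{B}_i|$, $\chi(R_i)=\omega(R_i)$, and $\omega(R_i)-|\mathcal{C}_i|=|\mathcal{B}_i|=s_i=\chi(R_i)-|\mathcal{C}_i|$.

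Having established these per-factor identities, I would combine them: Corollary \ref{3} gives
\[
\chi(R)\le\sum_{i=1}^n\bigl(\chi(R_i)-s_i\bigr)+s_1s_2\cdots s_n=\sum_{i=1}^n|\mathcal{C}_i|+\prod_{i=1}^n|\mathcal{B}_i|,
\]
using $\chi(R_i)-s_i=\chi(R_i)-|\mathcal{B}_i|=|\mathcal{C}_i|$ and $s_i=|\mathcal{B}_i|$. On the other hand Theorem \ref{1} gives
\[
\omega(R)=\prod_{i=1}^n\bigl(\omega(R_i)-|\mathcal{C}_i|\bigr)+\sum_{i=1}^n\bigl(\omega(R_i)-|\mathcal{B}_i|\bigr)=\prod_{i=1}^n|\mathcal{B}_i|+\sum_{i=1}^n|\mathcal{C}_i|.
\]
Thus $\omega(R)\le\chi(R)\le\omega(R)$, forcing $\chi(R)=\omega(R)$, which is exactly the claim. (Here I should note that $R$ being a coloring makes $n$ finite and each factor a coloring, so Corollary \ref{5.1}-type finiteness is not an issue.)

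The main obstacle I expect is the bookkeeping in the principal-ideal-ring and reduced cases: I need to be sure that the particular choice $\mathcal{A}_i=\mathcal{B}_i\cup\mathcal{C}_i$ singled out before Theorem \ref{1} (the maximum clique maximizing $|\mathcal{B}_i|$) really does realize $|\mathcal{B}_i|=s_i$, where $s_i$ is the largest index $j$ for which some proper color class $U^i_j$ contains a square-zero element. In the finite-nilpotency-$2$ case this is transparent because $J_i$ is simultaneously a maximum clique and the natural support of all square-zero elements; but for a reduced factor, or a principal ideal ring with a nontrivial idempotent structure, I would need the auxiliary observation that the number of color classes containing a square-zero element equals $\omega(R_i)-|\mathcal{C}_i|$, which is essentially the content of the computations behind Corollary \ref{5} and should be extracted as a small lemma. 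Once that identification is in place, the rest is the two-line sandwich above.
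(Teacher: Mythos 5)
Your overall strategy is exactly the one the paper intends: the paper offers no details for this corollary beyond asserting that it follows from Theorem \ref{1} and Corollary \ref{3} via per-factor identifications of $s_i$, $|\mathcal{B}_i|$, $|\mathcal{C}_i|$, and your sandwich $\omega(R)\le\chi(R)\le\sum_i|\mathcal{C}_i|+\prod_i|\mathcal{B}_i|=\omega(R)$ is the right way to finish. So you are not taking a different route; you are filling in the case analysis the paper omits. Two of your three cases, however, have gaps. The concrete error is in the index-of-nilpotency-$2$ case: you claim that $J_i$ is a maximum clique of $R_i$, with $\mathcal{C}_i=\emptyset$ and $\omega(R_i)=|J_i|=\chi(R_i)$. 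This is false unless $R_i$ is local. Take $R_i=\mathcal{Z}_2\times\mathcal{Z}_4$: its nilradical is $J_i=\{(0,0),(0,2)\}$ with $J_i^2=0$, yet $\{(0,0),(0,2),(1,0)\}$ is a clique, so $\omega(R_i)=3>|J_i|=2$ and $|\mathcal{C}_i|=1$. (The final identities $s_i=|\mathcal{B}_i|$ and $\chi(R_i)-s_i=|\mathcal{C}_i|$ still happen to hold there, but not for the reason you give.) The fix is to refine the decomposition first: write each finite factor as a product of finite local rings, each of which is then a field (reduced case) or local with $J^2=0$, and for a \emph{local} finite ring with $J^2=0$ your argument that $J$ is the maximum clique and $\chi=|J|=s$ is correct, since every element outside $J$ is a unit.

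The second, milder gap is the principal-ideal-ring case: you invoke the structure of \emph{finite (Artin)} principal ideal rings, but the PIR factors in the statement need not be finite or Artinian ($\mathcal{Z}$ itself, or $K[x]$, are PIR colorings). You need the general Zariski--Samuel structure theorem (every principal ideal ring is a finite product of principal ideal domains and special principal ideal rings), note that the PID factors are domains and hence fall under the reduced case with $s=|\mathcal{B}|=1$, $|\mathcal{C}|=1$, and observe that a special PIR which is a coloring is either a field or finite (Beck shows the nilradical of a coloring is finite, and a chain ring with finite nonzero maximal ideal is finite), so those factors reduce to the $\mathcal{Z}_{p^k}$-type computation of Corollary \ref{5}. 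You also correctly flag the one genuinely delicate point, namely exhibiting for each factor a proper $\chi(R_i)$-coloring in which the square-zero elements occupy exactly $|\mathcal{B}_i|$ color classes (note $s_i\ge|\mathcal{B}_i|$ always, since $\mathcal{B}_i$ is a clique of square-zero elements); this does need to be checked coloring by coloring in the chain-ring case, but it is the same verification already implicit in Corollary \ref{5}. With the local/PID refinement in place, your argument closes.
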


The above results are consequence of the Theorem \ref{1} and Corollary \ref{3}, because of the following facts.
In Corollary \ref{7}, for $1\leq i \leq k,\ |\mathcal{B}_i|=|J_i^{n_i}|=s_i,\ |\mathcal{C}_i|=0$ and, for $k+1\leq j\leq k+r,\ |\mathcal{B}_j|=|J_j^{m_j}|=s_j,\ |\mathcal{C}_j|=1$. 

\subsection{Infinitely many counter examples of rings for the conjecture of Beck.}
In \cite{And}, they showed that the local ring $R=\mathcal{Z}_4[X,Y,Z]/M$, where $M$ is an ideal generated by $\big\{X^2-2, Y^2-2, Z^2-2, Z^2, 2X, 2Y,2Z, XY, XZ, YZ-2\big\}$ is a  counter example of the Conjecture 1.2. of Beck in \cite{Beck}. That is, $\omega(R)=5$ and $\chi(R)=6$. 
Here we construct infinitely many counter examples of the Beck's conjecture. We now recall a result of Beck in \cite{Beck}.

\begin{thm}[\cite{Beck}, Theorem 3.8]\label{6}
Let $R$ be a non-zero reduced ring which is coloring, then $R$ has only a finite  number of minimal prime ideals. If $r$ is this number, then $\chi(R)=\omega(R)=r+1$.
\end{thm}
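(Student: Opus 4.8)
The plan is to derive Theorem~\ref{6} as a direct consequence of the machinery developed in Sections~2 and~3, using the Structure-type reduction that a reduced coloring has only finitely many minimal primes. First I would establish that a non-zero reduced coloring $R$ has only finitely many minimal prime ideals: if $R$ had infinitely many minimal primes $P_1,P_2,\ldots$, then by prime avoidance one can pick, for each $k$, an element $x_k$ lying in all of $P_1,\ldots,P_{k}$ except... — more directly, in a reduced ring distinct minimal primes $P_i$ give, via the Chinese-Remainder / comaximality behaviour of minimal primes in the total quotient ring, elements $e_i$ with $e_ie_j=0$ for $i\neq j$ (choose $x_i\in\bigcap_{j\neq i}P_j\setminus P_i$; then $x_ix_j$ lies in every minimal prime, hence in the nilradical, hence is $0$ since $R$ is reduced). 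These pairwise-orthogonal nonzero elements together with $0$ form arbitrarily large cliques, contradicting $\omega(R)\le\chi(R)<\infty$. So the number $r$ of minimal primes is finite.

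Next I would identify the structure forced by reducedness and finitely many minimal primes: $R$ embeds in $R/P_1\times\cdots\times R/P_r$ (since $\bigcap P_i=\mathrm{nil}(R)=\{0\}$), each $R/P_i$ is an integral domain, hence a non-zero reduced ring with no nonzero zero-divisors, so $\omega(R/P_i)=\chi(R/P_i)=2$ and $\mathcal{C}_i=\emptyset$, $|\mathcal{B}_i|=2$ in the notation preceding Theorem~\ref{1}. The key point is to argue that the clique and chromatic numbers of $R$ equal those of the product $R_1\times\cdots\times R_r$ with $R_i=R/P_i$; I would show a maximum clique of $R$ pulls back/pushes forward appropriately — an element $x$ is in a clique with $0$ iff it is a zero-divisor, and in a reduced ring the zero-divisor structure is governed exactly by which minimal primes contain $x$. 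Concretely, $x y=0$ in $R$ iff for every minimal prime $P_i$ either $x\in P_i$ or $y\in P_i$, which is precisely adjacency in the product of the domains $R/P_i$. Hence $\omega(R)=\omega(\prod R/P_i)$ and $\chi(R)=\chi(\prod R/P_i)$.

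Then I would invoke Corollary~\ref{4}: since each $R/P_i$ is a non-zero reduced ring, $\chi(R_1\times\cdots\times R_r)=\sum_{i=1}^r\chi(R/P_i)-(r-1)=2r-(r-1)=r+1$. For the clique number, Theorem~\ref{1} with $|\mathcal{C}_i|=0$ and $|\mathcal{B}_i|=\omega(R/P_i)=2$ gives $\omega(R_1\times\cdots\times R_r)=\prod_{i=1}^r(\omega(R/P_i)-0)+\sum_{i=1}^r(\omega(R/P_i)-2)=\prod_{i=1}^r 2+0=2^r$ — wait, this needs care: I would instead observe directly that the largest clique in a product of $r$ domains is obtained either from $\mathcal{B}=\{0,1\}^r$-type sets or, more simply, that the bound $r+1=\chi\ge\omega$ combined with an explicit clique of size $r+1$ (namely $0$ together with the $r$ "coordinate" idempotent-like vectors) forces $\omega(R)=r+1$ as well; so $\chi(R)=\omega(R)=r+1$.

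The main obstacle I anticipate is the passage between $R$ and the product $\prod R/P_i$ at the level of cliques and colorings when $R$ is a proper subring of that product — one must be sure that a proper coloring of $R$ of size $r+1$ actually exists (not just that $\chi\ge r+1$ from Beck's lower bound $\chi(R_1\times R_2)\ge\chi(R_1)+\chi(R_2)-1$ applied inductively to the quotients, which only bounds the product). The cleanest route around this is to color $R$ directly: put $x$ in class $i$ ($1\le i\le r$) if $P_i$ is the first minimal prime not containing $x$, and put all units (elements in no minimal prime) — equivalently, a designated class — together; checking that adjacent elements get different colors reduces to the adjacency characterization "$xy=0\iff\forall i\,(x\in P_i\ \text{or}\ y\in P_i)$" established above, which shows two same-class non-units cannot be adjacent. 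Verifying this coloring is proper and has exactly $r+1$ classes, together with exhibiting the size-$(r+1)$ clique, completes the argument; everything else is bookkeeping that follows from Theorem~\ref{1} and Corollary~\ref{4}.
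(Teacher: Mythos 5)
First, note that the paper itself offers no proof of Theorem~\ref{6}: it is simply quoted from Beck \cite{Beck} (his Theorem~3.8) for use in Section~4.3, so there is no internal argument to compare yours against; I am judging your proposal on its own terms.

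The serious gap is in your very first step, the finiteness of the set of minimal primes. You propose to choose $x_i\in\bigl(\bigcap_{j\neq i}P_j\bigr)\setminus P_i$ for each minimal prime $P_i$ and observe that $x_ix_j\in\bigcap_k P_k=\mathrm{nil}(R)=\{0\}$. The conclusion is fine \emph{if} such $x_i$ exist, but when there are infinitely many minimal primes the set $\bigl(\bigcap_{j\neq i}P_j\bigr)\setminus P_i$ can be empty: a prime containing an \emph{infinite} intersection of primes need not contain any one of them, so minimality of the $P_j$ gives you nothing, and prime avoidance is a finite statement. Indeed in reduced rings such as $C[0,1]$ the intersection of all minimal primes but one is still $\{0\}\subseteq P_i$. (Restricting to finitely many of the primes does not repair this, because then $x_ix_j$ only lies in those finitely many primes, not in the nilradical.) Beck's actual route is different: he first shows a coloring satisfies the ascending chain condition on annihilator ideals, and the maximal annihilator ideals are primes whose realizing elements are pairwise orthogonal; alternatively one can use that the localization of a reduced ring at a minimal prime is a field, so every element of $P_j$ is killed by some element outside $P_j$, which lets one correct the elements $u_i\in\prod_{j\neq i}P_j\setminus P_i$ into genuinely orthogonal ones. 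Some such input is needed; as written, the finiteness claim is not established.

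Once finiteness of $r$ is granted, your endgame is correct and is essentially Beck's: the coloring ``$c(x)=\min\{i:x\notin P_i\}$ for $x\neq 0$, with $0$ in its own class'' is proper because $x,y\notin P_i$ forces $xy\notin P_i$, giving $\chi(R)\leq r+1$; and $\{0,x_1,\dots,x_r\}$ with $x_i\in\prod_{j\neq i}P_j\setminus P_i$ (which now does exist, by minimality and primeness of $P_i$ applied to a \emph{finite} product) is a clique of size $r+1$. The middle of your write-up, however, should be discarded: the reduction to $\prod_i R/P_i$ is not justified ($R$ is only a subring of that product, so you get $\omega(R)\leq\omega(\prod R/P_i)$ and $\chi(R)\leq\chi(\prod R/P_i)$, not equality), and your application of Theorem~\ref{1} uses the wrong data --- for a domain $R_i$ the maximum clique is $\{0,x\}$ with $\mathcal{B}_i=\{0\}$, so $|\mathcal{B}_i|=1$ and $|\mathcal{C}_i|=1$, which gives $\prod_i 1+\sum_i 1=r+1$, not the $2^r$ you wrote before backing away from it. Since your direct clique and direct coloring already do the job, the whole detour through Theorem~\ref{1} and Corollary~\ref{4} can be deleted; what cannot be deleted is a correct proof that $r$ is finite.
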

Consider the ring $R_1=\mathcal{Z}_4[X,Y,Z]/ M$, where $M=\big<X^2-2, Y^2-2, Z^2-2, Z^2, 2X, $ \\ 
$2Y,2Z, XY, XZ, YZ-2\big>$.  
For $2\leq i\leq n$, no $x\in R_i$ with $x\neq 0$ and $x^2=0$ (that is, $R_i$ is a non-zero reduced ring). By Theorem \ref{1}, $\omega(R_1\times \ldots \times R_n)=5+\sum\limits_{i=2}^n\omega(R_i)-(n-1)$. By Corollary \ref{3}, 
$\chi(R_1\times \ldots \times R_n)=6+\sum\limits_{i=2}^n\chi(R_i)-(n-1)$. By Theorem \ref{6}, $\omega(R_i)=\chi(R_i)$, for $2\leq i\leq n$, we have $\omega(R_1\times \ldots \times R_n)<\chi(R_1\times \ldots \times R_n)$. 
\section{Concluding remark.}
In section 2, we found the clique number of finite product of rings in terms of its factor rings and in section 3, we obtained the bounds for chromatic number of finite product of rings in terms of its factor rings. 
Using these results, we obtained many consequences from \cite{And} and \cite{Beck} in sections 4.1 and 4.2. In section 4.3, we obtained infinitely many counter examples of the Conjecture 1.2. In this counter examples, the difference between chromatic number and clique is one, that is, $\chi-\omega$=1. We could not find examples of rings for which the difference is arbitrarily large. So we post the following question.

\noindent \textbf{Problem.} Given a positive integer $k\geq 2$, does there exist a ring $R$ for which $\chi(R)-\omega (R)=k$?

We believe that, the chromatic number of any ring is bounded by polynomial function of the clique number.

\subsection*{Acknowledgment}  This research was supported by the University Grant Commissions Start-Up Grant, Government of India grant No. F. 30-464/2019 (BSR) dated 27.03.2019.


\end{titlepage}
\end{document}